\pgfplotsset{compat=1.17}
\newcommand{\ordm}{\operatorname{ord-match}}
\newcommand{\indm}{\operatorname{ind-match}}
\newcommand{\reg}{\operatorname{reg}}
\newcommand{\minm}{\operatorname{min-match}}
\newcommand{\m}{\operatorname{match}}
\newcommand{\depth}{\operatorname{depth}}
\def\qedbox{\hbox{\vbox{\hrule\hbox{\vrule\kern3pt\vbox{\kern6pt}\kern3pt\vrule}\hrule}}}
\def\qed{\unskip\hfill\qedbox\vskip3mm} 
\def\eqed{\eqno\hbox{\quad\qedbox}} 
\def\drlabel#1{\label{#1}}
\title
[Induced, ordered matchings, and regularity]
{Induced matching, ordered matching and Castelnuovo-Mumford regularity of bipartite graphs}
\author[\initial{A.} \middlename{V.} Jayanthan]{\firstname{A.} \middlename{V.}
\lastname{Jayanthan}}
\address{Department of Mathematics, I.I.T. Madras, Chennai, INDIA - 600036.}
\email{jayanav@iitm.ac.in}
\author[\initial{S.} \middlename{Amin} Seyed Fakhari]
{\firstname{Seyed} \middlename{Amin} \lastname{Seyed Fakhari}}
\address{Departamento de Matem\'aticas, Universidad de los Andes, Bogot\'a, Colombia.}
\email{s.seyedfakhari@uniandes.edu.co}
\author[\initial{I.} Swanson]{\firstname{Irena} \lastname{Swanson}}
\address{Department of Mathematics, Purdue University, West Lafayette, IN 47907, USA.}
\email{irena@purdue.edu}
\author[\initial{S.} Yassemi]{\firstname{Siamak} \lastname{Yassemi}}
\address{Department of Mathematical Sciences, Carnegie Mellon University, 5000 Forbes Avenue, Pittsburg, PA 15213}
\email{syassemi@andrew.cmu.edu}
\thanks{
Part of the work was done when the first author was visiting Purdue University. He would like to thank the Department for funding the visit and for a great hospitality. The research of the second author is supported by a FAPA grant from the Universidad de los Andes.
}
\keywords{Induced matching, ordered matching, Castelnuovo-Mumford regularity, depth, edge ideal, cover ideal}
\subjclass{10X99, 14A12, 11L05}
\begin{document}

\begin{abstract}
Let $G$ be a finite simple graph and let $\indm(G)$ and $\ordm(G)$ denote the induced matching number and the ordered matching number of $G$, respectively.
We characterize all bipartite graphs $G$ with $\indm(G)=\ordm(G)$. We establish the Castelnuovo-Mumford regularity of powers of edge ideals and depth of powers of cover ideals for such graphs. We also give formulas for the count of connected non-isomorphic spanning subgraphs of $K_{m,n}$ for which $\indm(G) = \ordm(G) = 2$, with an explicit expression for the count when $m \in \{2,3,4\}$ and $m \le n$.
\end{abstract}

\newtheorem{example}[cdrthm]{Example}
\newtheorem{examples}[cdrthm]{Examples} 
\newtheorem{proposition}[cdrthm]{Proposition}
\newtheorem{definition}[cdrthm]{Definition}
\newtheorem{corollary}[cdrthm]{Corollary}
\newtheorem{remarks}[cdrthm]{Remarks}
\newtheorem{notation}[cdrthm]{Notation}

\maketitle

\section{Introduction}\label{sec1}

Let $G$ be a finite simple graph on the vertex set $V(G)= \{x_1,\ldots,x_d\}$ and edge set $E(G)$. We identify the vertices to variables and consider the polynomial ring $S=K[x_1,\ldots,x_d]$, where $K$ is a field. The \textit{edge ideal} of $G$ is defined as $I(G) = \langle \{x_ix_j ~:~ x_ix_j \in E(G)\}\rangle \subset S$. Ever since the introduction of the edge ideal by Villarreal in \cite{V90}, researchers have been trying to understand the interplay between the combinatorial properties of graphs and the algebraic properties of the associated edge ideals. One particular invariant, the Castelnuovo-Mumford regularity, has received much of the attention, compared to other invariants and properties. Several upper and lower bounds for the regularity of edge ideals were obtained by several researchers, see \cite{BBH19} and references therein. Whenever there is an upper and a lower bound for an invariant, it is natural to ask what are some necessary conditions and sufficient conditions for these two bounds to coincide, and structurally understand those objects for which these two bounds are equal. In this article, we address this question for the upper bound of ordered matching number and the lower bound of induced matching number.

Computing or bounding the Castelnuovo-Mumford regularity of the associated edge ideal and its powers, in terms of combinatorial data associated with $G$, has been a very active area of research for the past couple of decades. Bounds using several \textit{matching numbers} have been obtained for the regularity. For graph $G$, let $\indm(G), \ordm(G), \minm(G)$ and $\m(G)$ denote induced matching number, ordered matching number, minimum matching number and matching number, respectively (see Section 2 for the definitions). 

It is known that \[\indm(G) \leq \reg(S/I(G))\leq \{\ordm(G), \minm(G)\} \leq \m(G),\]
where the first inequality was proved by Katzman, \cite{K06}, the second inequality can be found in~\cite{W14} (for $\minm(G)$) and \cite{CV11} (for $\ordm(G))$ and the third inequality follows from the definition.
A graph $G$ is said to be a {\it Cameron-Walker graph} if $\indm(G) = \m(G)$. This is a class of graphs which is well studied from both combinatorial and algebraic perspectives, \cite{CW05,HHKO15,HKMT21}. 
In \cite{HHKT16}, Hibi et al. studied graphs with $\indm(G) = \minm(G)$. They gave a structural characterization of graphs satisfying $\indm(G) = \minm(G)$. 
In this article, we study graphs satisfying $\indm(G) = \ordm(G)$. 

Besides the combinatorial reasons for understanding graphs $G$ with $\indm(G) = \ordm(G)$, there is also an algebraic motivation to understand graphs with this property. It was proved by Cutkosky, Herzog and Trung, \cite{CHT}, and independently by Kodiyalam, \cite{vijay}, that for a homogeneous ideal $I$ in a polynomial ring, $\reg(I^s)$ is a linear polynomial for $s\gg 0$. In the case of edge ideals, there have been extensive research in understanding this function and the polynomial in terms of combinatorial data associated with $G$, (see for example \cite{BBH19} and the references within). 
It was shown in \cite[Theorem 4.5]{selviha} and \cite[Theorem 3.9]{SY23} that for every integer $s\geq 1$,$$2s+\indm(G)-2\leq \reg(S/I(G)^s)\leq 2s+\ordm(G)-2.$$
If $\indm(G)=\ordm(G)$, then this gives an explicit expression for the regularity of powers of the edge ideal.

Classifying all graphs $G$ with $\indm(G) = \ordm(G)$ would be  an extremely hard problem in general, and in this paper we concentrate on classifying bipartite graphs satisfying this property. Another important reason for restricting our attention to the bipartite case is the behavior of the depth function of the cover ideal, see the end of \Cref{sec:prelim}. 

For smaller values of induced and ordered matching, it is easier to handle the corresponding bipartite graphs. First we give graph theoretic characterization for graphs $G$ with  $\indm(G)=\ordm(G)=1$ (Theorem \ref{thmindred1}). We then move on to understand the structure of graphs in terms of the connectivity between the bipartitions. This gives us a classification of all bipartite graphs $G$ with $\indm(G)=\ordm(G)$, (\Cref{thmordindclass}). 
To illustrate that this class of graphs is very different from the class of graphs $G$ with $\indm(G) = \minm(G)$, we construct a class of graphs, $G_{r,m}$, $2\leq r \leq m$, with $\reg(S/I(G_{r,m})) = \indm(G) = \ordm(G)  = r$ and $\minm(G) = m$.


Our characterization of bipartite graphs with equal induced and ordered matching numbers allows us to count the number of graphs $G$ satisfying $\indm(G)=\ordm(G)\le 2$, see \Cref{thmindred1} and \Cref{thmall}. The work for $\indm(G)=\ordm(G)=2$ leads us to some interesting connections between the presence of edges and certain number of integer sequences. 

The paper is organized as follows. We collect some graph theory essentials in \Cref{sec:prelim}. Characterizing the equality of induced and ordered matching numbers is done in \Cref{sec:indordeq}. In \Cref{sec3}, we introduce some preliminaries to count the bipartite graphs with $\indm(G)=2 = \ordm(G)$, namely we set up the notation for certain integer sequences and their counting. We count all graphs with $\indm(G)=2=\ordm(G)$ in \Cref{sec:count}. We provide the closed forms for certain elementary summations to count these graphs in \Cref{sec:appendix} as an appendix.




%





\section{Preliminaries}\label{sec:prelim}
In this section, we provide the definitions and basic facts which will be used in the next sections.

Let $S=K[x_1, \dots, x_n]$ be the polynomial ring over a field $K$ and let $M$ be a finitely generated graded $S$-module. Suppose that the minimal graded free resolution of $M$ is given by
$$0\rightarrow \cdots \rightarrow \bigoplus_j S(-j)^{\beta _{1,j}(M)}\rightarrow \bigoplus_j S(-j)^{\beta _{0,j}(M)}\rightarrow M\rightarrow 0.$$
The Castelnuovo-Mumford regularity (or simply, regularity) of $M$, denoted by ${\rm reg}(M)$, is defined as
$${\rm reg}(M)={\rm max}\{j-i\mid \beta _{i,j}(M)\neq 0\}.$$Also, the {\it projective dimension} of $M$ is defined to be$${\rm pd}(M)={\rm max}\{i\mid \beta _{i,j}(M)\neq 0 \ {\rm for \ some \ } j\}.$$

For a vertex $x_i\in V(G)$, the {\it neighbor set} of $x_i$ is defined to be the set $N_G(x_i)=\big\{x_j\mid x_ix_j\in E(G)\big\}$. Moreover, the {\it closed neighborhood} of $x_i$ is $N_G[x_i]=N_G(x_i)\cup \{x_i\}$. The cardinality of $N_G(x_i)$ is the {\it degree} of $x_i$ and is denoted by ${\rm deg}_G(x_i)$. A vertex of degree one is called a {\it leaf} of $G$. The graph $G$ is a {\it forest} if it does not have any cycle. The {\it distance} between $x_i$ and $x_j$ in $G$ is defined to be the length of the shortest path between $x_i$ and $x_j$ in $G$. For a subset $W \subset V(G)$, $G \setminus W$ denotes the induced subgraph of $G$ on the vertex set $V(G) \setminus W$. A subset $A$ of $V(G)$ is said to be an {\it independent subset} of $G$ if there are no edges among the vertices of $A$. The graph $G$ is called {\it unmixed} (or {\it well-covered}) if all maximal independent sets of $G$ have the same cardinality.

A \textit{matching} in a graph is a subgraph consisting of pairwise disjoint edges. The cardinality of the largest matching in $G$ is the \textit{matching number} of $G$ and is denoted by $\m(G)$. A matching in $G$ is said to be a \textit{maximal matching} if it is not properly contained in any matching of $G$. The \textit{minimum matching number} of $G$, denoted by $\minm(G)$, is the minimum cardinality of a maximal matching in $G$. A matching is said to be an \textit{induced matching} if none of the edges in the matching are joined by an edge in $G$. The largest size of an induced matching in $G$ is called the \textit{induced matching number} of $G$, denoted by $\indm(G)$. A graph $G$ is called a \textit{Cameron-Walker} graph if $\indm(G) = \m(G)$.

A set $A = \{x_{i1} x_{i2} \in E(G) \mid 1 \leq i \leq r\}$ is said to be an \textit{ordered matching}, \cite{CV11}, if 
\begin{enumerate}
    \item $A$ is a matching in $G$,
    \item $\{x_{i1}  \mid 1 \leq  i \leq r \}$ is an independent set,
    \item if $x_{i1} x_{j2} \in E(G)$, then $i \leq j$.
\end{enumerate}
The \textit{ordered matching number} of $G$, denoted by $\ordm(G)$, is defined to be \[\ordm(G):= \max\{|A| :~ A \text{ is an  ordered matching of } G\}.\]

As already written in the introduction, \[\indm(G) \leq \reg(S/I(G))\leq \{\ordm(G), \minm(G)\} \leq \m(G),\]
There are several examples with inequality $\minm(G)\leq\ordm(G)$ and other examples with inequality $\ordm(G)\leq\minm(G)$.


%
A graph $G$ is said to be {\it Cohen-Macaulay} (resp. {\it sequentially Cohen-Macaulay}) if $S/I(G)$ is Cohen-Macaulay (resp. sequentially Cohen-Macaulay).

A {\it bipartite graph} $G$ is a graph with $V(G) = X \sqcup Y$ and $E(G) \subset X \times Y$. If $|X| = m$ and $|Y| = n$ and $E(G) = X \times Y$, then we say that $G$ is a complete bipartite graph, and we denote $G$ by $K_{m,n}$. If $G$ is a bipartite graph, then $G^{bc}$, called the \textit{bipartite complement}, is the bipartite graph with $V(G^{bc}) = V(G)=X \sqcup Y$ and $xy \in E(G^{bc})$ if and only if $xy \notin E(G)$.
A subgraph $H$ of a graph $G$ is said to be a \textit{spanning subgraph} if $V(H) = V(G).$ If $G = K_{m,n}$, then the set of connected spanning subgraphs of $G$ is precisely the set of all connected bipartite graphs on $X \sqcup Y$, where $|X|=m$ and $|Y|=n$.

For the rest of the article, $G$ always denotes a bipartite graph, without isolated vertices, on the finite vertex set $V(G) =  X \sqcup Y$.

It was proved by Brodmann \cite{Brod79} that for any homogeneous ideal $I$ in a graded ring~$R$, $\depth(R/I^k)$ is a constant for $k\gg 0$.  One consequence of ind-match and ord-match being equal is the constancy from the start of the depth function of powers of the {\it cover ideal} $J(G) = \bigcap_{x_ix_j \in E(G)} (x_i,x_j)$ of $G$, as we prove next.

\begin{theorem}\drlabel{thmcover}
Assume that $G$ is a bipartite graph with $\indm(G)=\ordm(G)$ and suppose $d=|V(G)|$. Let $J(G)$ denote the cover ideal of $G$. Then for every integer $k\geq 1$, we have$${\rm depth}(S/J(G)^k)=d-\indm(G)-1.$$  
\end{theorem}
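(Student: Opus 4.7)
The idea is to pin down $\depth(S/J(G))$ at $k=1$ via Alexander duality, and for larger $k$ to sandwich the depth between a lower bound phrased in terms of $\ordm(G)$ and an upper bound phrased (through the $k=1$ case) in terms of $\indm(G)$, whose collapse under the hypothesis yields the desired equality.

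For the first step, I would combine Terai's theorem, ${\rm pd}(S/J(G)) = \reg(I(G)) = \reg(S/I(G)) + 1$, with the Auslander--Buchsbaum formula to obtain $\depth(S/J(G)) = d - \reg(S/I(G)) - 1$. The hypothesis $\indm(G) = \ordm(G)$, together with the sandwich $\indm(G) \le \reg(S/I(G)) \le \ordm(G)$ recalled in the introduction, forces $\reg(S/I(G)) = \indm(G)$, so $\depth(S/J(G)) = d - \indm(G) - 1$, settling the case $k=1$.

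To pass from $k=1$ to arbitrary $k$, I would use two inputs specific to the bipartite case. First, since $G$ is bipartite, $J(G)$ is normally torsion-free (a classical consequence of the K\"onig property of bipartite graphs, going back to Gitler--Villarreal and Simis--Vasconcelos--Villarreal), hence $J(G)^{(k)} = J(G)^k$ for every $k \ge 1$, so results about symbolic powers transfer to ordinary powers. Next, I would invoke the known lower bound $\depth(S/J(H)^{(k)}) \ge |V(H)| - \ordm(H) - 1$, valid for every graph $H$ and every $k \ge 1$ (from work of the second author on symbolic powers of cover ideals); applied to $G$ under $\ordm(G) = \indm(G)$ this reads $\depth(S/J(G)^k) \ge d - \indm(G) - 1$. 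For the matching upper bound I would appeal to the monotonicity statement that $k \mapsto \depth(S/J(G)^k)$ is non-increasing for bipartite $G$, so $\depth(S/J(G)^k) \le \depth(S/J(G)) = d - \indm(G) - 1$, and the two inequalities together yield equality for all $k \ge 1$.

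The only real obstacle I expect is marshalling the precise bibliographic references for the lower bound and for the monotonicity statement; the combinatorial content of the proof is clean, namely that the equality $\indm(G) = \ordm(G)$ is exactly what is required to close the gap between the $\ordm$-based lower bound on $\depth(S/J(G)^k)$ and the $\indm$-based upper bound inherited from the Terai-dual computation at $k=1$.
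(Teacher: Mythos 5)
Your proposal is correct and follows essentially the same route as the paper: Terai's theorem plus Auslander--Buchsbaum (with $\reg(S/I(G))=\indm(G)$ forced by the sandwich $\indm\le\reg\le\ordm$) settles $k=1$, and the non-increasing depth function of powers of $J(G)$ (via symbolic powers equaling ordinary powers for bipartite graphs) closes the gap using the $\ordm$-based value. The only cosmetic difference is that you quote a per-$k$ lower bound $\depth(S/J(G)^{k})\ge d-\ordm(G)-1$, whereas the paper combines the same monotonicity with the asymptotic equality $\depth(S/J(G)^{k})=d-\ordm(G)-1$ for $k\gg 0$, which yields the identical squeeze.
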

\begin{proof}
Since ${\rm reg}(I(G))=\indm(G)+1$ by inequalities in the Introduction, it follows from Terai's theorem \cite[Proposition 8.1.10]{hh2010} that the projective dimension of $S/J(G)$ is equal to $\indm(G)+1$. Thus, Auslander-Buchsbaum formula implies that ${\rm depth}(S/J(G))=d-\indm(G)-1$. On the other hand, it follows from \cite[Corollary 2.6]{GRV05} and \cite[Theorem 3.2]{HKTT17} that$$d-\indm(G)-1={\rm depth}(S/J(G))\geq {\rm depth}(S/J(G)^2)\geq {\rm depth}(S/J(G)^3)\geq \cdots.$$Moreover, we know  from \cite[Theorem 3.4]{HKTT17} (see also \cite[Theorem 3.1]{FAKHARI17}) that ${\rm depth}(S/J(G)^k)=d-\ordm(G)-1$ for any $k\gg 0$. Since $\ordm(G)=\indm(G)$, the assertion follows from the above inequalities. 
\end{proof}

\section{Equality of induced and ordered matching numbers}\label{sec:indordeq}

In this section, we characterize all bipartite graphs $G$ with $\ordm(G)=\indm(G)$. Before proving our general characterization (Theorem \ref{thmordindclass}), we restrict ourselves to a special family of bipartite graphs for which the characterization has simpler formulation comparing with the general case. More precisely, we consider the class of sequentially Cohen-Macaulay bipartite graphs $G$. In \cite{FH, adam, VV08}, the authors studied the sequential Cohen-Macaulayness of $S/I(G)$ in terms of the combinatorial properties of $G$. Here, we study it in terms of the matching numbers. In the following theorem, we show that for a bipartite sequentially Cohen-Macaulay graph $G$, the equality $\ordm(G)=\m(G)$ holds. As a consequence of this equality, we are able to characterize sequentially Cohen-Macaulay bipartite graphs $G$ with $\ordm(G)=\indm(G)$.  

\begin{theorem}\drlabel{thmseqCM}
Let $G$ be a bipartite graph. If $G$ is sequentially Cohen-Macaulay, then $\ordm(G)={\rm match}(G)$. In particular, for a sequentially Cohen-Macaulay bipartite graph $G$, we have
$\indm(G)=\ordm(G)$ if and only if $G$ is a Cameron-Walker graph.
\end{theorem}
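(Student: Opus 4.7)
The statement splits into the main claim that $\ordm(G) = \m(G)$ for any bipartite sequentially Cohen-Macaulay graph $G$, and the \textit{in particular} biconditional. The biconditional falls out immediately from the main claim combined with the chain $\indm(G) \leq \ordm(G) \leq \m(G)$ recalled in the introduction: if $G$ is bipartite seqCM with $\indm(G) = \ordm(G)$, then $\ordm(G) = \m(G)$ forces $\indm(G) = \m(G)$, which is precisely the definition of a Cameron-Walker graph; conversely, a Cameron-Walker $G$ has $\indm(G) = \m(G)$, and this squeezes $\ordm(G)$ to equal both. So the substantive work is to prove $\ordm(G) = \m(G)$ under the seqCM hypothesis.

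For the main claim, the plan is to induct on $|V(G)|$, exploiting the inductive structure of bipartite sequentially Cohen-Macaulay graphs supplied by the equivalence of seqCM with shellability and vertex decomposability in the bipartite case (due to Van Tuyl and Villarreal) together with the attendant shedding-vertex machinery. Concretely, the structural input one needs is that any bipartite seqCM graph $G$ without isolated vertices contains a leaf $y \in Y$ whose deletion together with its unique neighbor $x \in X$ yields another bipartite seqCM graph; isolated vertices can be discarded throughout since they affect neither $\ordm$ nor $\m$. A standard matching-exchange argument then gives $\m(G \setminus \{x, y\}) = \m(G) - 1$, because any maximum matching of $G$ can be modified to contain the edge $xy$ (the only edge incident to $y$).

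Apply induction to $G' = G \setminus \{x, y\}$ to extract an ordered matching $(x_2, y_2), \ldots, (x_r, y_r)$ in $G'$ with $r = \m(G)$. Prepending $(x_1, y_1) := (x, y)$ gives the candidate ordered matching $(x_1, y_1), \ldots, (x_r, y_r)$ of size $r$ in $G$. The three defining conditions are easy to verify: the $x$-side $\{x_1, \ldots, x_r\}$ lies inside $X$ and is therefore independent; any edge $x_1 y_j \in E(G)$ automatically satisfies $1 \leq j$; and no edge $x_i y_1$ with $i > 1$ can exist because $y_1 = y$ is a leaf whose only neighbor is $x_1 = x$. This yields $\ordm(G) \geq \m(G)$, with the reverse inequality being automatic. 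The main obstacle is the structural input itself: proving that a bipartite sequentially Cohen-Macaulay graph always admits such a leaf, and that removing it along with its neighbor preserves the seqCM property. This is where one must carefully cite the vertex-decomposable reduction theory for bipartite graphs; once it is in place, the rest of the argument is a routine induction plus a small matching manipulation.
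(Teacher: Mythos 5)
Your overall strategy is the same as the paper's: induct on the number of vertices, invoke the Van Tuyl--Villarreal structure result that a bipartite sequentially Cohen--Macaulay graph has a leaf whose deletion together with its unique neighbor leaves a sequentially Cohen--Macaulay graph, observe that this deletion drops the matching number by exactly one, and extend an ordered matching of the smaller graph by one edge; the deduction of the biconditional from $\indm\le\ordm\le\m$ is also as in the paper. The gap is in the one step where you actually build the larger ordered matching. You prepend the edge with the \emph{neighbor} $x$ of the leaf as its first vertex and justify independence of the first vertices by saying they all ``lie inside $X$.'' But the induction hypothesis only provides \emph{some} maximum ordered matching of $G'=G\setminus\{x,y\}$; the definition of an ordered matching merely requires the set of first vertices to be independent, so those first vertices may lie in $Y$, or be split between the two sides. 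If some first vertex $a_i$ of the inductive matching lies in $Y$ and is adjacent to $x$, then $\{x,a_2,\dots,a_r\}$ is not independent and your prepended family is not an ordered matching. So the sentence ``the $x$-side $\{x_1,\dots,x_r\}$ lies inside $X$ and is therefore independent'' is unjustified as written, and the labels $(x_i,y_i)$ with $x_i\in X$ already presuppose what needs to be proved.

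The repair is easy, and it is exactly what the paper does by citing [SF16, Lemma 2.1] for $G\setminus N_G[x]$ with $x$ the leaf: use the \emph{leaf} as the first vertex of the new edge and append it at the \emph{end} of the ordered matching. Since the leaf's only neighbor has been deleted, the leaf is adjacent to no vertex of $G'$, so independence holds and the ordering condition is automatic ($a_ib_{r}\in E(G)$ with $i<r$ is harmless, while the leaf has no edge to any earlier $b_j$), with no assumption about which side the inductive first vertices occupy. Alternatively, you could first prove a normalization lemma that in a bipartite graph every ordered matching can be replaced by one of the same size whose first vertices all lie in $X$ (flip the edges whose first vertex is in $Y$ and list them, in reverse order, before the remaining edges), or strengthen the induction hypothesis accordingly; either route is fine but requires an argument your proposal does not supply. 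With that one step corrected, your proof coincides with the paper's.
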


\begin{proof}
We prove the equality $\ordm(G)={\rm match}(G)$ by induction on $|V(G)|$. If $|V(G)|=2$, then $\ordm(G)=\m(G)=1$. Assume by induction that if $H$ is a sequentially Cohen-Macaulay bipartite graph with $|V(H)| < |V(G)|$, then $\ordm(H)=\m(H)$. By \cite[Corollary 3.11]{VV08}, there is a leaf $x\in V(G)$ such that $G\setminus N_G[x]$ is sequentially Cohen-Macaulay. Using \cite[Lemma 2.1]{SF16} and the induction hypothesis we have$$\ordm(G)\geq \ordm(G\setminus N_G[x])+1={\rm match}(G\setminus N_G[x])+1={\rm match}(G)$$ where the last equality follows from the fact that $x$ is a leaf of $G$. Thus, $\ordm(G)={\rm match}(G)$. The second assertion follows by observing that $G$ is Cameron-Walker if and only if $\indm(G)=\m(G)$.
\end{proof}

The converse of the above Theorem does not hold. In fact, the following example shows that for each integer $k\geq 3$, there is a non-sequentially Cohen-Macaulay bipartite graph $G$ with ${\rm match}(G)=\ordm(G)=k$. 

\begin{example}\drlabel{exmatchsell}
For any integer $k\geq 3$, let $G_k$ be the graph obtained from a $4$-cycle graph by attaching a path of length $2k-3$ to exactly one of its vertices. Using induction on $k$, We show that $G_k$ is not sequentially Cohen-Macaulay and $\ordm(G_k)={\rm match}(G_k)=k$. It is easy to see that $\ordm(G_3)={\rm match}(G_3)=3$. Moreover, let $x$ be the unique leaf of $G_3$ and let $y$ be the unique neighbor of $x$. Then $G_3\setminus N_{G_3}[y]$ is the $4$-cycle graph that is not sequentially Cohen-Macaulay. Hence, by \cite[Corollary 3.11]{VV08}, the graph $G_3$ is not sequentially Cohen-Macaulay. Now, suppose that $k\geq 4$. Let $z$ be the unique leaf of $G_k$. Then $G_k\setminus N_{G_k}[z]$ is isomorphic to $G_{k-1}$ that is not sequentially Cohen-Macaulay by induction hypothesis. Moreover, since $z$ is a leaf of $G_k$, we have ${\rm match}(G_k)={\rm match}(G_{k-1})+1=k$. On the other hand, using \cite[Lemma 2.1]{SF16} and the induction hypothesis, we have $$\ordm(G_k)\geq \ordm(G_{k-1})+1=k.$$Thus, $\ordm(G_k)=k$.
\end{example}

In \Cref{exmatchsell}, we showed that for each integer $k\geq 3$, there is a non-sequentially Cohen-Macaulay bipartite graph $G$ with ${\rm match}(G)=\ordm(G)=k$. The following proposition shows that we cannot expect such an example when $k\leq 2$.

\begin{proposition}\drlabel{propseqCM}
Let $G$ be a bipartite graph with ${\rm match}(G)=\ordm(G)\leq 2$. Then $G$ is a sequentially Cohen-Macaulay graph.    
\end{proposition}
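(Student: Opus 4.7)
The plan is to split on the value $\m(G) = \ordm(G) \in \{1,2\}$ and, in the non-trivial case, on where a minimum vertex cover sits inside the bipartition.

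If $\m(G) = 1$, then any two edges of $G$ share a vertex, and since $G$ is bipartite without isolated vertices an elementary argument forces $G$ to be a star $K_{1,n}$. Stars are forests and forests are sequentially Cohen-Macaulay, so this case is immediate. Assume now $\m(G) = \ordm(G) = 2$. By K\"onig's theorem $G$ has a vertex cover $\{u, v\}$ of size $2$, and I would split cases according to whether $u, v$ lie on the same side of $V(G) = X \sqcup Y$. If $u \in X$ and $v \in Y$, then for any $w \in X \setminus \{u\}$ the incident edges at $w$ cannot be covered by $u$ and so must all be covered by $v$, whence $w$ has $v$ as its unique neighbour; symmetrically for $Y \setminus \{v\}$. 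Hence $G$ is either a disjoint union of two stars (if $uv \notin E(G)$) or a double star with central edge $uv$ (if $uv \in E(G)$): in both subcases $G$ is a forest and therefore sequentially Cohen-Macaulay.

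The essential case is $u, v \in X$ (the case $u, v \in Y$ is symmetric). Absence of isolated vertices forces $X = \{u, v\}$ and $Y = N_G(u) \cup N_G(v)$, so I would partition $Y = A \sqcup B \sqcup C$ with $A = N_G(u) \setminus N_G(v)$, $B = N_G(v) \setminus N_G(u)$, and $C = N_G(u) \cap N_G(v)$. The key structural step is to show $A \cup B \neq \emptyset$: otherwise $G = K_{2, |C|}$, and a short inspection of the (finitely many) ways to orient a pair of disjoint edges in $K_{2,n}$ shows that no ordered matching of size $2$ exists, contradicting $\ordm(G) = 2$. Once $A \cup B \ne \emptyset$, pick without loss of generality $y \in A$; then $y$ is a leaf of $G$ and $G \setminus N_G[y]$ is the star centred at $v$ with leaves $B \cup C$ together with the isolated vertices $A \setminus \{y\}$, which is evidently sequentially Cohen-Macaulay. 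Applying the reverse direction of the leaf-shedding characterization of sequentially Cohen-Macaulay bipartite graphs, \cite[Corollary 3.11]{VV08} (the same source already used in \Cref{thmseqCM}), then gives that $G$ is sequentially Cohen-Macaulay.

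The main obstacle I anticipate is the $K_{2,n}$ subcase: verifying that $\ordm(K_{2,n}) = 1$ requires a small but careful case analysis of the possible orientations of a two-edge matching, since the independence axiom on the first vertices and the monotonicity axiom on the crossing edges interact to block every attempt. Everything else reduces to an elementary structural description of bipartite graphs with a $2$-vertex cover combined with the cited shedding principle, so I foresee no further serious difficulty.
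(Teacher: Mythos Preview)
Your approach via K\"onig's theorem is cleaner than the paper's, which argues by contradiction: assuming $G$ is not sequentially Cohen-Macaulay, the paper observes that $G$ cannot be a forest, locates a $4$-cycle (the only cycle possible since $\m(G)\le 2$), and then analyzes the neighbourhood of this cycle to conclude that $G$ is a proper spanning subgraph of some $K_{2,m}$. You reach the same structural conclusion directly from the existence of a size-$2$ vertex cover, bypassing the cycle analysis; this also dispatches the forest cases in one stroke (your mixed-cover subcase) rather than via a blanket ``forests are sequentially Cohen-Macaulay'' at the outset. From that point on, both arguments coincide: rule out $K_{2,n}$ via $\ordm(K_{2,n})=1$, find a leaf, and apply \cite[Corollary 3.11]{VV08}.

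There is, however, a small gap in your final step. The reverse direction of \cite[Corollary 3.11]{VV08} requires not only that $G\setminus N_G[y]$ be sequentially Cohen-Macaulay, but also $G\setminus N_G[u]$, where $u$ is the unique neighbour of the leaf $y$; the paper's own proof checks both conditions (its $G\setminus N_G[s]$ and $G\setminus N_G[w]$). In your notation $N_G[u]=\{u\}\cup A\cup C$, so $G\setminus N_G[u]$ is the star centred at $v$ with leaf set $B$ (or just the isolated vertex $v$ if $B=\emptyset$), again a forest and hence sequentially Cohen-Macaulay. So the missing check is trivial, but as written your invocation of the shedding criterion is incomplete.
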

\begin{proof}
By contradiction, suppose $G$ is not sequentially Cohen-Macaulay. It is known that any forest is sequentially Cohen-Macaulay (see for instance, \cite[Theorem 1.3]{VV08}). As a consequence, $G$ is not a forest and therefore, has a cycle $C$. Since ${\rm match}(G)\leq 2$, the length of $C$ is equal to four. Suppose $V(C)=\{w,x,y,z\}$  and $E(C)=\{wx,xy,yz,wz\}$. Since ${\rm match}(C)=2$ and $\ordm(C)=1$, we conclude that $G\neq C$. This implies that there is an edge say $wv$ connected to $C$, where $v$ is a vertex in $V(G)\setminus V(C)$. As $G$ is a bipartite graph, $v$ is not adjacent to the vertices $x$ and $z$. If there is a vertex $u\in V(G)\setminus V(C)$ such that $ux\in E(G)$ or $uz\in E(G)$, then the matching number of $G$ would be at least three, which is a contradiction. Thus, $\deg_G(x)=\deg_G(z)=2$. If $G$ has a vertex $t\in V(G)\setminus V(C)$ whose distance from $w$ or $y$ is at least two, then again, the matching number of $G$ would be at least three, which is a contradiction. Hence, $V(G)=N_G[w]\cup N_G[y]$. Since $G$ is a bipartite graph two distinct vertices belonging to $N_G(w)\cup N_G(y)$ can not be adjacent. Hence,
$V(G)=\{w,y\}\sqcup (V(G)\setminus \{w,y\})$ is the bipartition for the vertex set of $G$. Consequently, $G$ is a subgraph of 
$K_{2,m}$, for some integer $m\geq 3$. If $G=K_{2,m}$, then 
$\ordm(G)=1$, which is a contradiction. Thus, $G\neq K_{2,m}$. So, $G$ has a leaf, say $s$. Then the unique neighbor of $s$ is either $w$ or $y$. Without loss of generality, we may assume that $ws\in E(G)$. Then the graphs $G\setminus N_G[w]$ and $G\setminus N_G[s]$ are forests (as they do not contain the vertex $w$ and so, the cycle $C$). Therefore, using \cite[Theorem 1.3]{VV08}, the graphs $G\setminus N_G[w]$ and $G\setminus N_G[s]$ are sequentially Cohen-Macaulay. Hence, \cite[Corollary 3.11]{VV08} implies that $G$ is a sequentially Cohen-Macaulay graph, which is a contradiction. 
\end{proof}

In the following result, we give a characterization of bipartite graphs with  $\ordm(G)=\indm(G)=1$. A classification of bipartite graphs with ord-match and ind-match being equal to an arbitrary positive integer strictly bigger than 1 is in \Cref{thmordindclass}.

\begin{theorem}\drlabel{thmindred1}
Let $G$ be a bipartite graph. Then $\ordm(G)=\indm(G)=1$ if and only if $G$ is a complete bipartite graph.
\end{theorem}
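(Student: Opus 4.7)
The plan is to prove both directions by direct analysis of the matching structure, exploiting the bipartite structure to control the independent set condition in the definition of ordered matching.

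For the ($\Leftarrow$) direction, let $G=K_{m,n}$ with bipartition $X\sqcup Y$. The equality $\indm(G)=1$ is immediate since any two vertex-disjoint edges $x_1y_1$ and $x_2y_2$ (with $x_i\in X$, $y_i\in Y$) are joined by the edge $x_1y_2\in E(G)$. For $\ordm(G)=1$, I would argue by contradiction: if $\{x_{11}x_{12},\,x_{21}x_{22}\}$ were an ordered matching, then since $\{x_{11},x_{21}\}$ is independent and $G$ is bipartite (so the only independent pairs of distinct vertices lie on the same side), both $x_{11}$ and $x_{21}$ lie in a common part, say $X$; hence $x_{12},x_{22}\in Y$, and the completeness of $G$ forces $x_{21}x_{12}\in E(G)$, contradicting condition (3).

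For the ($\Rightarrow$) direction, I would argue the contrapositive. Assume $G$ is \emph{not} complete bipartite, so there exist $x\in X$ and $y\in Y$ with $xy\notin E(G)$. Since $G$ has no isolated vertices, pick $y'\in N_G(x)$ (necessarily $y'\neq y$) and $x'\in N_G(y)$ (necessarily $x'\neq x$). I would then split on whether the edge $x'y'$ is present. If $x'y'\notin E(G)$, the induced subgraph on $\{x,x',y,y'\}$ is exactly the matching $\{xy',\,x'y\}$ (no edges inside $X$ or $Y$ because $G$ is bipartite, and $xy, x'y'$ are absent by hypothesis), so $\indm(G)\ge 2$. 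If $x'y'\in E(G)$, I would exhibit an ordered matching of size two by setting $x_{11}=x'$, $x_{12}=y$, $x_{21}=x$, $x_{22}=y'$: the first vertices $\{x',x\}\subset X$ are independent, the cross-edge $x_{11}x_{22}=x'y'$ is an edge and respects $1\le 2$, and the cross-edge $x_{21}x_{12}=xy$ is absent, so condition (3) holds vacuously; hence $\ordm(G)\ge 2$. Either way we contradict the hypothesis $\indm(G)=\ordm(G)=1$.

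There is no serious obstacle: the proof hinges on the elementary observation that in a bipartite graph the only nontrivial independent pairs available for the ``first vertices'' in an ordered matching lie on a single side, together with the standard $4$-vertex case analysis for the presence/absence of the diagonal edge $x'y'$. The one subtlety is remembering to invoke the standing assumption that $G$ has no isolated vertices, which is what guarantees the existence of the neighbors $x'$ and $y'$ used to build the witness configurations.
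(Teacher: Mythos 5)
Your proof is correct and follows essentially the same route as the paper: the forward direction is proved contrapositively by taking a non-edge $xy$ and neighbors $y'\in N_G(x)$, $x'\in N_G(y)$ to produce a two-edge ordered matching. Your case split on whether $x'y'\in E(G)$ is actually unnecessary, since condition (3) of an ordered matching only forbids the edge $x_{21}x_{12}=xy$, so $\{x'y,\,xy'\}$ is an ordered matching in both cases, which is exactly the paper's one-step argument.
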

\proof
If $G = K_{m,n}$ for some $m,n\geq 1$, then clearly $\indm(G)=1=\ordm(G)$. Conversely suppose $G$ is not a complete bipartite graph. Write $V(G) = \{x_1,\ldots, x_m\}\sqcup \{y_1,\ldots,y_n\}$. Since $G$ is not complete bipartite, there exist $i, j$ such that $x_iy_j \notin E(G)$. By permuting the vertices, we may assume that $x_jy_j \in E(G)$. Choose an $r$ such that $x_iy_r \in E(G)$. Then $\{x_jy_j, x_iy_r\}$ is an ordered matching in $G$. Hence $\ordm(G) > 1$.
\qed

\begin{definition}\drlabel{defcI}
For every subset $I$ of $X$,
let $C_I$ be the set of all vertices in $Y$
that have an edge to all the vertices in $I$ and to none in $X \setminus I$.
Set $c_I = |C_I|$.
\end{definition}

Sometimes we shorten the notation and write in the subscripts not the set but its elements, without commas. For example, we abbreviate $c_{\{i,j\}}$ to $c_{ij}$. 
If $I, J$ are distinct subsets of $X$,
then $C_I$ and $C_J$ are disjoint.
In fact,
each $y \in Y$ belongs to exactly one of $C_I$.
Since $G$ does not have isolated vertices, we have
$C_\emptyset = \emptyset$.

In order to characterize bipartite graphs $G$ with $\indm(G)=\ordm(G)$, we need the following two propositions.



\begin{proposition}\drlabel{propinduced}
Let $G$ be a bipartite graph and let $r$ be a positive integer.
Then the induced matching number of $G$ is at least~$r$
if and only if there exist subsets $J_1, J_2, \ldots, J_r$ of $X$ such that none of the $J_i$ is contained in the union of the others and such that $c_{J_1} c_{J_2} \cdots c_{J_r} > 0$.
\end{proposition}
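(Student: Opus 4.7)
The plan is to exhibit a direct correspondence between induced matchings of size $r$ in $G$ and collections $J_1,\ldots,J_r \subseteq X$ satisfying the two stated conditions. In the forward direction, I would start with an induced matching consisting of edges $x_1y_1,\ldots,x_ry_r$ with $x_i \in X$ and $y_i \in Y$, and set $J_i := N_G(y_i) \subseteq X$. Since $y_i$ has $N_G(y_i) = J_i$, it lies in $C_{J_i}$, giving $c_{J_i} \geq 1$. The ``induced'' hypothesis is what forces the non-containment condition: for every $i$, the endpoint $x_i$ lies in $J_i$ (as $x_iy_i$ is an edge) but $x_i \notin J_k$ for $k \neq i$ (otherwise $x_iy_k$ would be an edge of $G$, contradicting the induced property). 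Hence $x_i$ witnesses that $J_i \not\subseteq \bigcup_{k \neq i} J_k$.

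For the converse, suppose $J_1,\ldots,J_r$ are given satisfying the hypotheses. First I would observe that the sets must be pairwise distinct, since $J_i = J_j$ with $i \neq j$ would trivially yield $J_i \subseteq J_j \subseteq \bigcup_{k \neq i} J_k$. Next, select $y_i \in C_{J_i}$ for each $i$, which is possible because $c_{J_i} > 0$. The $y_i$'s are automatically distinct since the sets $C_{J_1},\ldots,C_{J_r}$ are pairwise disjoint (as noted after Definition \ref{defcI}). Using the non-containment assumption, pick $x_i \in J_i \setminus \bigcup_{k \neq i} J_k$ for each $i$; these $x_i$'s are then pairwise distinct by construction.

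Finally I would verify that $\{x_1y_1,\ldots,x_ry_r\}$ is an induced matching. Each $x_iy_i$ is an edge because $x_i \in J_i = N_G(y_i)$. Bipartiteness rules out any edge among the $x_i$'s or among the $y_j$'s. For the cross pairs $x_iy_k$ with $i \neq k$, the defining property $x_i \notin J_k = N_G(y_k)$ shows no such edge exists. Hence the $r$ chosen edges form a matching with no additional edges of $G$ between them, i.e., an induced matching of size~$r$.

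The argument is essentially a direct translation between the two combinatorial descriptions, and I do not anticipate a genuine obstacle. The only point requiring care is the dual role of the non-containment condition: it simultaneously encodes the existence of a distinguishing $X$-vertex $x_i$ on each edge of a candidate induced matching and the combinatorial rigidity that prevents different $y_i$'s from being forced to share neighborhoods or to be adjacent to the wrong $x_k$.
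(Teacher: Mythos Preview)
Your proof is correct and follows essentially the same approach as the paper's: in both directions one takes $J_i = N_G(y_i)$ for the forward implication and picks $x_i \in J_i \setminus \bigcup_{k\neq i} J_k$ together with $y_i \in C_{J_i}$ for the converse. You include a few extra verifications (distinctness of the $J_i$ and of the $y_i$) that the paper leaves implicit, but the argument is the same.
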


\proof
Suppose that $\indm(G)$ is at least $r$.
Then there exist $a_1, \ldots, a_r \in X$ and $b_1, \ldots, b_r \in Y$ such that $a_1b_1, a_2b_2, \ldots, a_rb_r$ is an induced matching.
For each $i \in [r]$, let $J_i$ be the set of all $x \in X$
with an edge to $b_i$.
Then $a_i \in J_i$, $a_j \not \in J_i$ for all $j \not = i$,
and $b_i \in C_{J_i}$.
The conclusion follows for these $J_1, \ldots, J_r$.

Conversely,
suppose that there exist subsets $J_1, J_2, \ldots, J_r$ of $X$ such that none is contained in the union of the others and such that $c_{J_1} c_{J_2} \cdots c_{J_r} > 0$.
For each $i \in [r]$,
let $a_i \in J_i$ that is not in the union of the other $J_j$.
Since $c_{J_i} > 0$, there exists $b_i \in C_{J_i}$.
Then by the definition of these sets,
$a_1b_1, a_2b_2, \ldots, a_rb_r$ is an induced matching,
so that $\indm(G)$ is at least $r$.
\qed

\begin{proposition}\drlabel{propinducedordered}
Let $G$ be a bipartite graph and let $r$ be a positive integer. Then the following are equivalent:
\begin{enumerate}
    \item $\ordm(G) \geq r$. 
    \item There exist subsets $J_1, J_2, \ldots, J_r$ of $X$
    such that $c_{J_1} c_{J_2} \cdots c_{J_r} > 0$
    and for all $i \in [r]$,
    $J_i$ is not contained in $J_1 \cup \cdots \cup J_{i-1}$.
\end{enumerate}
Moreover, $\ordm(G) \leq r$
if and only if for all possible sets $J_1, \ldots, J_r$ with the properties as in (2),
their union equals~$X$.
\end{proposition}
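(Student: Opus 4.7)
The plan is to prove the equivalence of (1) and (2) along the lines of Proposition~\ref{propinduced}, with the ordered-matching axiom translating directly into the asymmetric noncontainment condition. For (1)$\Rightarrow$(2), starting from an ordered matching $a_1b_1,\ldots,a_rb_r$ with $a_i \in X$ and $b_i \in Y$, I would set $J_i := \{x \in X : xb_i \in E(G)\}$, so that $b_i \in C_{J_i}$ and hence $c_{J_i} \ge 1$. The ordered axiom ``$a_ib_k \in E(G)$ forces $i \le k$'' is exactly the statement that $a_i \notin J_k$ for every $k < i$; combined with $a_i \in J_i$ this gives an explicit element witnessing $J_i \not\subseteq J_1 \cup \cdots \cup J_{i-1}$.

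For (2)$\Rightarrow$(1), I would reverse the construction: choose $a_i \in J_i \setminus (J_1 \cup \cdots \cup J_{i-1})$ (possible by the noncontainment hypothesis) and $b_i \in C_{J_i}$ (possible since $c_{J_i} > 0$). The noncontainment also forces the $a_i$ to be distinct and the $J_i$ themselves to be distinct (otherwise one finds $J_i \subseteq J_1 \cup \cdots \cup J_{i-1}$), so the sets $C_{J_i}$ are pairwise disjoint and the $b_i$ are distinct. Each edge $a_ib_i$ lies in $E(G)$ because $b_i \in C_{J_i}$ is adjacent to every vertex of $J_i$, and for $j < i$ the condition $a_i \notin J_j$ forbids $a_ib_j \in E(G)$, verifying the ordered-matching axiom.

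For the moreover statement I would argue both directions by contraposition using the main equivalence. If $\ordm(G) \le r$ but some valid $r$-tuple $J_1,\ldots,J_r$ has $\bigcup_i J_i \ne X$, pick $x \in X \setminus \bigcup_i J_i$; since $G$ has no isolated vertices, $x$ has a neighbor $y \in Y$, and $y \in C_J$ for $J := \{x' \in X : x'y \in E(G)\}$, which contains $x$. Then $J \not\subseteq \bigcup_i J_i$, so appending $J_{r+1} := J$ yields an $(r+1)$-tuple satisfying (2), whence $\ordm(G) \ge r+1$, contradicting the hypothesis. Conversely, if $\ordm(G) \ge r+1$, I would extract $J_1,\ldots,J_{r+1}$ satisfying (2); the truncation $J_1,\ldots,J_r$ still satisfies (2), but any element of $J_{r+1} \setminus \bigcup_{i \le r} J_i$ shows its union is strictly smaller than $X$.

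There is no substantial obstacle here, as the content is essentially bookkeeping. The only care required is the verification of distinctness of the $a_i$ and of the $J_i$ in the (2)$\Rightarrow$(1) step, together with the explicit use of the standing assumption—recorded just before Definition~\ref{defcI}—that $G$ has no isolated vertices, without which one could not produce the extending set $J_{r+1}$ in the moreover direction.
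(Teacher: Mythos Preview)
Your proposal is correct and follows essentially the same route as the paper: both define $J_i$ as the $X$-neighborhood of $b_i$ for (1)$\Rightarrow$(2), reverse the construction by choosing $a_i\in J_i\setminus(J_1\cup\cdots\cup J_{i-1})$ and $b_i\in C_{J_i}$ for (2)$\Rightarrow$(1), and handle the moreover clause by extending a valid $r$-tuple whose union misses some $x\in X$ using a neighbor of $x$. The only cosmetic difference is that in the moreover direction the paper extends the ordered matching itself by the edge $xb$, whereas you extend the $J$-tuple by $J_{r+1}$ and then invoke (2)$\Rightarrow$(1); your version also spells out the distinctness of the $J_i$ and $b_i$, which the paper leaves implicit.
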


\proof
(1) $\Rightarrow$ (2):
Suppose that $\ordm(G)$ is at least $r$.
Then there exist $a_1, \ldots, a_r \in X$ and $b_1, \ldots, b_r \in Y$ such that 
$a_1b_1, a_2b_2, \ldots, a_rb_r$ is an ordered matching.
For each $i \in [r]$,
let $J_i$ be the set of all $x \in X$ with an edge to $b_i$.
Then $a_i \in J_i$, $a_{i+1}, \ldots, a_r \not \in J_i$,
and $b_i \in C_{J_i}$.
Thus (2) follows for these $J_1, \ldots, J_r$.

(2) $\Rightarrow$ (1):
This is trivial for $r = 1$, so we may assume that $r > 1$.
Let $J_1, J_2, \ldots, J_r$ be subsets of $X$
    such that for all $i \in [r]$,
    $J_i$ is not contained in $J_1 \cup \cdots \cup J_{i-1}$ and
    such that $c_{J_1} c_{J_2} \cdots c_{J_r} > 0$.
For each $i \in [r]$,
let $a_i \in J_i \setminus J_1 \cup \cdots \cup J_{i-1}$.
Since $c_{J_i} > 0$, there exists $b_i \in C_{J_i}$.
By definition of these sets,
$a_1b_1, a_2b_2, \ldots, a_rb_r$ is an ordered matching.
This proves (1).

If there exists $a \in X \setminus J_1 \cup \cdots \cup J_r$, 
then since $a$ is not an isolated vertex,
there is a vertex $b \in Y$ that is adjacent to $a$.
By definition of the sets $C_I$,
there is no edge between $a$ and $b_1, \ldots, b_r$.
Thus $a_1b_1, a_2b_2, \ldots, a_rb_r, ab$ is an ordered matching.
Thus $\ordm(G)$ is strictly bigger than~$r$.
Then by the equivalence of (1) and (2),
the union of $J_1, \ldots, J_r$ cannot be $X$.
This proves the last part.
\qed

Using Propositions \ref{propinduced} and \ref{propinducedordered}, we can classify all bipartite graphs for which ord-match and ind-match are equal.

\begin{theorem}\drlabel{thmordindclass} (Classification)
Let $G$ be a bipartite graph and let $r > 1$ be a positive integer.
    Let $J_1, \ldots, J_z$ be all the subsets $I$ of $X$ for which $c_I$ is positive.
    Then $G$ has ind-match and ord-match equal to~$r$ if and only if the following conditions are satisfied:
    \begin{enumerate}
        \item $z \ge r$.
        \item There exist distinct $j_1, \ldots, j_r \in [z]$ such that none of the $J_{j_i}$ is contained in the union of the remaining $J_{j_k}$.
        \item
For all $j_1, \ldots, j_r$ as in (2),
 $J_{j_1} \cup \cdots \cup J_{j_r} = X$.
    \end{enumerate}
\end{theorem}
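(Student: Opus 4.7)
\emph{Proof plan.} My approach is to reduce the classification directly to Propositions~\ref{propinduced} and~\ref{propinducedordered}, which translate the inequalities $\indm(G)\ge r$ and $\ordm(G)\le r$ into combinatorial conditions on the list $J_1,\ldots,J_z$. Once these translations are in place, the theorem is obtained by combining them with the general inequality $\indm(G)\le\ordm(G)$.

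For the forward direction, assume $\indm(G)=\ordm(G)=r$. Since $\indm(G)\ge r$, Proposition~\ref{propinduced} produces $r$ distinct indices $j_1,\ldots,j_r\in[z]$ for which $J_{j_1},\ldots,J_{j_r}$ satisfy the induced condition; this gives conditions (1) and (2) at once. To establish (3), I argue by contradiction: suppose some tuple as in (2) has $\bigcup_i J_{j_i}\ne X$. Pick $a\in X\setminus\bigcup_i J_{j_i}$ and, using that $a$ is not isolated, a neighbor $y\in Y$ of $a$; let $J_{j_{r+1}}$ be the unique subset in the list with $y\in C_{J_{j_{r+1}}}$. Then $a\in J_{j_{r+1}}\setminus\bigcup_i J_{j_i}$, so $J_{j_{r+1}}\not\subseteq\bigcup_i J_{j_i}$. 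Because the first $r$ sets are induced they satisfy the ordered condition of Proposition~\ref{propinducedordered}(2) in every arrangement, and appending $J_{j_{r+1}}$ preserves the ordered condition; the proposition then forces $\ordm(G)\ge r+1$, contradicting $\ordm(G)=r$.

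For the backward direction, assume (1), (2), (3). Condition (2) combined with Proposition~\ref{propinduced} gives $\indm(G)\ge r$. I would next check $\indm(G)\le r$ by contradiction: if one had $r+1$ subsets $K_1,\ldots,K_{r+1}$ satisfying the induced condition, then the subtuple $K_1,\ldots,K_r$ still satisfies the hypothesis of (2), so by (3) their union equals $X$, forcing $K_{r+1}\subseteq X=K_1\cup\cdots\cup K_r$ and violating the induced property of the full family. Hence $\indm(G)=r$, and it remains to show $\ordm(G)\le r$.

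The last step is the main obstacle, because (3) speaks only about induced families while Proposition~\ref{propinducedordered} characterizes $\ordm(G)\le r$ in terms of ordered families, which need not be induced. My plan is to argue by contradiction: given an ordered $r$-tuple $(I_1,\ldots,I_r)$ of $J_i$'s whose union is a proper subset of $X$, extend it by the ``new element'' construction of the forward direction to an ordered $(r+1)$-tuple, and then prune the enlarged family to an induced $r$-subfamily whose union still misses some vertex of $X$; such an $r$-subfamily contradicts~(3). The delicate combinatorial point is to carry out the pruning so that the resulting $r$-subfamily is genuinely induced and its union remains proper in $X$.
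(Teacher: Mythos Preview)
Your reduction to Propositions~\ref{propinduced} and~\ref{propinducedordered} is exactly the paper's approach; the forward direction and the verification $\indm(G)\ge r$ match the paper, and your separate argument for $\indm(G)\le r$ is correct (the paper skips it, obtaining $\indm(G)\le\ordm(G)\le r$ in one stroke at the end).

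The genuine gap is precisely the step you flag: deducing $\ordm(G)\le r$ from~(3). Your pruning plan cannot be completed. Take $X=\{1,2,3\}$ with $J$-list $\{1\},\{1,2\},\{2,3\}$. For $r=2$ the only incomparable pairs are $\{1\},\{2,3\}$ and $\{1,2\},\{2,3\}$, both with union $X$, so (1)--(3) hold under your reading; yet $(\{1\},\{1,2\},\{2,3\})$ is an ordered triple and $\ordm(G)=3$. Starting from the ordered pair $(\{1\},\{1,2\})$ with union $\{1,2\}\ne X$, the only extension is $(\{1\},\{1,2\},\{2,3\})$, and every incomparable $2$-subfamily of that triple already has union $X$ --- there is nothing to prune to. The paper does not attempt any such pruning: in the backward direction it simply invokes the ``Moreover'' clause of Proposition~\ref{propinducedordered}, which is a statement about \emph{ordered} $r$-families. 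That is, the paper is reading ``$j_1,\ldots,j_r$ as in~(2)'' as tuples satisfying the hypothesis of Proposition~\ref{propinducedordered}(2); under that reading $\ordm(G)\le r$ is an immediate citation and no further combinatorics is needed. Under your literal reading of~(3) as ranging only over induced $r$-families, the backward implication is actually false, as the example shows, so the pruning you propose is not merely delicate but impossible.
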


\proof
First suppose that $\indm(G)$ and $\ordm(G)$ equal~$r$.
Then by \Cref{propinduced},
(1) and (2) hold.
\Cref{propinducedordered} implies (3).

Now suppose that the three conditions are satisfied.
Then by \Cref{propinduced}, induced matching number of $G$ is at least $r$
and $\ordm(G)$ is at most $r$.
But $\ordm(G)$ is greater than or equal to $\indm(G)$. 
So, $\indm(G)$ and $\ordm(G)$ are both equal to $r$.
\qed

We can say more in case $\ordm(G)$ and $\indm(G)$ are both equal to~2:

\begin{theorem}\drlabel{thmordindclasstwo} (Classification for r = 2)
Let $G$ be a bipartite graph.
    Let $J_1, \ldots, J_z$ be all the subsets $I$ of $X$ for which $c_I$ is positive.
    Then $G$ has ind-match and ord-match equal to~$2$ if and only if the following conditions are satisfied:
    \begin{enumerate}
        \item $z \ge 2$.
        \item There exist distinct $i, j \in [z]$ such that neither $J_i$ nor $J_j$ is contained in the other.
        \item
        For any two distinct $i, j \in [z]$, $J_i \cup J_j = X$.
    \end{enumerate}

Furthermore, if some $J_i$ and $J_j$ are disjoint,
then $z$ equals $2$ or $3$;
the latter exactly when the graph is connected and in this case $c_X > 0$.
\end{theorem}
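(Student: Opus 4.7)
The equivalence here is essentially \Cref{thmordindclass} specialized to $r=2$, with the caveat that condition (3) is strengthened to quantify over \emph{all} pairs $i\neq j$ in $[z]$, not only over incomparable pairs. My plan is to reduce to \Cref{thmordindclass} on one side, and on the other to use \Cref{propinducedordered} to upgrade (3) for comparable pairs.

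For the ``only if'' direction, I would apply \Cref{thmordindclass} with $r=2$ to get (1), (2), and $J_i\cup J_j=X$ whenever $J_i,J_j$ are incomparable. To handle a comparable pair $J_i\subsetneq J_j$, observe that $c_{J_i}c_{J_j}>0$ and $J_j\not\subseteq J_i$, so $(J_i,J_j)$ is admissible in \Cref{propinducedordered}(2) for $r=2$; the last part of that proposition, combined with $\ordm(G)\le 2$, forces $J_i\cup J_j=X$, that is $J_j=X$. For the converse, the strong form of (3) stated here trivially implies the weaker form required by \Cref{thmordindclass}, so the latter delivers $\indm(G)=\ordm(G)=2$.

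For the ``furthermore'' part, assume $J_a\cap J_b=\emptyset$ for some distinct $a,b$. Condition (3) gives $J_a\cup J_b=X$, hence $X=J_a\sqcup J_b$. For any other $J_k$, (3) yields both $J_k\cup J_a=X$ and $J_k\cup J_b=X$, forcing $J_k\supseteq J_b$ and $J_k\supseteq J_a$; thus $J_k=X$. Consequently the list contains at most one set beyond $J_a$ and $J_b$, so $z\in\{2,3\}$, and if $z=3$ the extra set must be $X$ itself, i.e.\ $c_X>0$. When $z=2$ the graph decomposes as the disjoint union of the induced bipartite subgraphs on $J_a\cup C_{J_a}$ and $J_b\cup C_{J_b}$, so $G$ is disconnected; when $z=3$, any vertex in $C_X$ is adjacent to every vertex of $X$, and since no vertex of $Y$ is isolated, this shows $G$ is connected. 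Together these give ``$z=3$ iff $G$ is connected''.

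The one step that requires a genuine idea is the upgrade of condition (3) to comparable pairs in the forward direction: it is not handled directly by \Cref{thmordindclass}, and one must remember that a comparable pair $J_i\subsetneq J_j$ still produces a valid ordered configuration (with $J_i$ first), which via \Cref{propinducedordered} forces $J_j=X$. Everything else is bookkeeping on the complementation identities $X\setminus J_a=J_b$ and $X\setminus J_b=J_a$ coming from the disjointness assumption.
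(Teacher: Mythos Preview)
Your proof is correct and follows essentially the same route as the paper. The paper compresses the upgrade of condition~(3) to comparable pairs into the single remark ``part~(3) here is equivalent to part~(3) in \Cref{thmordindclass} because $J_i$ and $J_j$ are distinct sets,'' which is exactly the observation you spell out: for distinct $J_i,J_j$ one may order them so that $J_j\not\subseteq J_i$, and then the last part of \Cref{propinducedordered} with $\ordm(G)\le 2$ forces $J_i\cup J_j=X$. Your treatment of the ``furthermore'' clause, deriving $J_k=X$ from the two complement inclusions and then reading off connectivity/disconnectivity from the presence or absence of $C_X$, matches the paper's argument as well.
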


\proof
Equivalence follows from \Cref{thmordindclass}. Note that part (3) here is equivalent to part (3) in \Cref{thmordindclass} because $J_i$ and $J_j$ are distinct sets.
 
Let $k \in [z] \setminus \{i,j\}$.
Then by condition~(3),
$J_k$ contains the complement of $J_i$ and $J_j$,
so that if $J_i$ and $J_j$ are disjoint,
then $J_k$ contains $X$.
This means that $J_k$ has to equal $X$.
Thus either $z = 3$ and $G$ is connected,
or else $z = 2$ and $G$ is not connected.
\qed

As a consequence of the above theorem, we obtain an explicit graph theoretic characterization of bipartite graphs $G$ with $\indm (G) =\ordm(G)=2$.

\begin{corollary}\drlabel{corindred2}
Let $G$ be a bipartite graph. Then $\indm (G) =\ordm(G)=2$ if and only if the bipartite complement $G^{bc}$ of $G$ is the disjoint union of complete bipartite graphs $H_1, \ldots, H_s$ with $s\geq 2$ such that at least two of the $H_i$ are not isolated vertices.
\end{corollary}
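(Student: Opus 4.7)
The plan is to invoke \Cref{thmordindclasstwo} and translate its set-theoretic conditions on $J_1, \ldots, J_z$ into the structural description of $G^{bc}$ given in the corollary. The guiding observation is that, for each $y \in Y$, $N_{G^{bc}}(y) = X \setminus N_G(y)$; so the vertices of $Y$ are partitioned by their $G^{bc}$-neighborhoods into the classes $C_{J_1}, \ldots, C_{J_z}$, and the distinct $G^{bc}$-neighborhoods of $y$-vertices are exactly the sets $X \setminus J_1, \ldots, X \setminus J_z$ (with $X\setminus J_i = \emptyset$ whenever $J_i = X$, producing $G^{bc}$-isolated $y$-vertices).

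The first step is to recognize that condition (3) of \Cref{thmordindclasstwo}, namely $J_i \cup J_j = X$ for all distinct $i,j$, is equivalent to pairwise disjointness of the complements $X \setminus J_i$. In the $G^{bc}$-picture this says that two $y$-vertices with different $G^{bc}$-neighborhoods have disjoint neighborhoods, from which a short path-chasing argument identifies the connected components of $G^{bc}$ exactly: for each $i$ with $J_i \neq X$, the complete bipartite graph on the vertex set $(X \setminus J_i) \cup C_{J_i}$; plus a single-vertex component for each $y \in C_X$ (when $X$ appears among the $J_i$) and for each $x \in \bigcap_i J_i$. This already shows that $G^{bc}$ is a disjoint union of complete bipartite graphs.

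Finally, I would translate condition (2). Under condition (3), an inclusion $J_i \subseteq J_j$ for distinct $i, j$ forces $J_j = X$, so condition (2) is equivalent, given (3), to the existence of at least two indices $i$ with $J_i \neq X$; these correspond exactly to the components of $G^{bc}$ whose $X$-side is non-empty, that is, the non-isolated-vertex components $H_i$. Combining, conditions (1)--(3) of \Cref{thmordindclasstwo} are equivalent to the decomposition statement in the corollary, and in particular $s \geq 2$ follows automatically once two non-isolated components are present. For the converse direction one simply reads off the $J_i$ from the components of $G^{bc}$ meeting $Y$ (adjoining $J_i = X$ for single-vertex $y$-components) and verifies (1)--(3) directly. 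The main bookkeeping subtlety is handling the possible isolated-vertex components of $G^{bc}$ (arising from $\bigcap_i J_i$ in $X$ and from $C_X$ in $Y$), which are invisible to the $J_i$-data but do not affect any of the three conditions.
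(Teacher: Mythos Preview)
Your proposal is correct and follows essentially the same route as the paper: both invoke \Cref{thmordindclasstwo} and translate its conditions into the structure of $G^{bc}$ via the identity $N_{G^{bc}}(y)=X\setminus N_G(y)$, with condition~(3) becoming pairwise disjointness of the sets $X\setminus J_i$ and hence the complete-bipartite component decomposition of $G^{bc}$. The one minor difference is in handling condition~(2): you argue set-theoretically that, under~(3), condition~(2) is equivalent to having at least two indices with $J_i\neq X$, while the paper uses the shortcut that $\indm(G)\geq 2$ if and only if $\indm(G^{bc})\geq 2$ (an induced matching $\{x_1y_1,x_2y_2\}$ in $G$ yields the induced matching $\{x_1y_2,x_2y_1\}$ in $G^{bc}$, and conversely), from which the ``at least two non-isolated components'' clause is immediate once $G^{bc}$ is known to be a disjoint union of complete bipartite graphs.
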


\begin{proof}
Let $J_1, \ldots, J_z$ be all the subsets $I$ of $X$ for which $c_I$ is positive. Then for any pair of distinct integers $i, j \in [z]$, we have $J_i \cup J_j = X$ if and only if for any choice of vertices $y\in C_{J_i}$ and $y'\in C_{J_j}$, the equality $N_{G^{bc}}(y)\cap N_{G^{bc}}(y')=\emptyset$ holds. Thus, Condition (3) of Theorem \ref{thmordindclasstwo} (and in fact, by Proposition \ref{propinducedordered}, the inequality $\ordm(G)\leq 2$) is equivalent to say that $G^{bc}$ is the disjoint union of complete bipartite graphs. Moreover, note that $\indm (G)\geq 2$ if and only if $\indm (G^{bc})\geq 2$. Since $G^{bc}$ is the disjoint union of complete bipartite graphs, we deduce at least two of these components are not isolated vertices.
\end{proof}


Let $G$ be a bipartite graph. It is known that if $G$ is either unmixed or sequentially Cohen-Macaulay, then ${\rm reg}(S/I(G))=\indm(G)$. In the following theorem, for every pair of integers $r,m$ with  $2\leq r\leq m$, we construct a bipartite graph $G_{r,m}$ which is neither sequentially Cohen-Macaulay nor unmixed, moreover, ${\rm reg}(S/I(G_{r,m}))=\indm(G_{r,m})=\ordm(G_{r,m})=r$ and 
$\minm(G_{r,m})=m$. Hence, the class of graphs we study in this paper is not contained in two general classes of bipartite graphs for which the regularity of edge ideals is known. Furthermore, Theorem \ref{thmregordmatch} shows that the family of graphs $G$ with $\indm(G)=\ordm(G)$ is far from the class of graphs considered in \cite{HHKT16}.

\begin{theorem}\drlabel{thmregordmatch}
Let $2\leq r\leq m$  be positive integers. Then there is a connected  bipartite graph $G_{r,m}$ such that
\begin{enumerate}
\item ${\rm reg}(S/I(G_{r,m}))=\indm(G_{r,m})=\ordm(G_{r,m})=r$ and 
$\minm(G_{r,m})=m$.
\item $G_{r,m}$ does not have any leaf (and hence $G_{r,m}$ is not a sequentially Cohen-Macaulay graph).
\item $G_{r,m}$ is not an unmixed graph.
\end{enumerate}
\end{theorem}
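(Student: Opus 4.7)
The plan is to construct $G_{r,m}$ explicitly as a bipartite graph built from a central ``book with shared edge'' on $r$ pages, enlarged when $m>r$ by auxiliary hub vertices and petal leaves. Setting $c=m-r+1$ and $s=m-r$, I will take the bipartition $X'\sqcup Y$ where $X'=\{x_0,x_1,\dots,x_r\}\sqcup\{u_1,\dots,u_s\}$ and $Y=\{y_0,y_1,\dots,y_r\}\sqcup\{v_{i,j}: 1\le i\le r,\ 1\le j\le c-1\}$. The edges are: $x_0$ and each $u_k$ adjacent to every vertex of $Y$; and for each $i\in\{1,\dots,r\}$, the vertex $x_i$ adjacent to $y_0$, to $y_i$, and to each $v_{i,j}$. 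When $m=r$ this reduces to the bipartite book on $r$ pages sharing the edge $x_0y_0$ (no $u_k$ or $v_{i,j}$); when $m>r$ the $u_k$ and $v_{i,j}$ enlarge the graph to accommodate larger $\minm$.

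To establish $\indm(G_{r,m})=\ordm(G_{r,m})=r$ I will apply Theorem~\ref{thmordindclass}. The neighborhoods $N_G(y)$ for $y\in Y$ take exactly $r+1$ distinct values: $J_{y_0}=X'$ and $J_i:=\{x_0,x_i,u_1,\dots,u_s\}$ for $i\in\{1,\dots,r\}$. The family $\{J_1,\dots,J_r\}$ is an antichain because each $J_i$ contains the private element $x_i\notin\bigcup_{j\ne i}J_j$, and its union is $X'$; any alternative size-$r$ subfamily must include $J_{y_0}=X'$, which contains every other $J_i$, so the antichain condition fails. Hence $\{J_1,\dots,J_r\}$ is the unique antichain of size $r$, and its union is $X'$, so Theorem~\ref{thmordindclass} gives $\indm(G_{r,m})=\ordm(G_{r,m})=r$. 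The equality $\reg(S/I(G_{r,m}))=r$ then follows from the chain $\indm\le\reg\le\ordm$ recorded in the Introduction.

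For $\minm(G_{r,m})=m$ I will analyze a maximal matching $M$ via its uncovered sets $A\subseteq X'$ and $B\subseteq Y$. Because $x_0$ and each $u_k$ are adjacent to every $y\in Y$, if any one of them lies in $A$ then $B=\emptyset$ and $|A|=|X'|-|Y|=-(m-r)(r-1)\le 0$, contradicting $|A|\ge 1$; hence $A\subseteq\{x_1,\dots,x_r\}$. For each $x_i\in A$, maximality forces all $c$ members of the petal $C_i:=\{y_i\}\cup\{v_{i,j}\}_j$ to be matched into $\{x_0,u_1,\dots,u_s\}$ (since $x_i$ itself is uncovered), and these are disjoint matches, so $c|A|\le 1+s=c$. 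Thus $|A|\le 1$ and $|M|\ge |X'|-1=m$. Conversely, an explicit maximal matching of size $m$ is obtained by picking $i\in\{1,\dots,r\}$ and $j\ne i$, matching $y_0$ to $x_j$, matching $C_i$ bijectively to $\{x_0,u_1,\dots,u_s\}$, and matching each remaining $x_k$ to $y_k$; its maximality follows directly from the neighborhood structure.

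Finally, every vertex of $G_{r,m}$ has degree at least two: $x_i$ has degree $1+c$, each $y_i$ and $v_{i,j}$ has degree $2+s$, and $x_0, u_k, y_0$ have still larger degrees. Thus $G_{r,m}$ is leaf-free and hence not sequentially Cohen-Macaulay by \cite[Corollary 3.11]{VV08}. Non-unmixedness follows by exhibiting $X'$ (of size $r+1+s$) and $\{x_1\}\cup(Y\setminus C_1)$ (of size $1+|Y|-c$) as two maximal independent sets of different cardinalities. The step I expect to be most delicate is establishing sharpness of the bound $|A|\le 1$: it crucially uses that $y_0$, the unique vertex with $J_{y_0}=X'$, must itself be covered by $M$, and this obligation, combined with the petal-cover count against the hub vertices, collapses the otherwise weaker disjoint-index bound and prevents $A$ from being larger.
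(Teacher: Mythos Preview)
Your construction is different from the paper's and your main arguments are correct, but there is a slip in part~(3). The paper builds $G_{r,m}$ on equal-sized parts $|X|=|Y|=m+1$, with $N(y_j)=\{x_1,x_{j+1}\}$ for $j<r$, $N(y_j)=\{x_1,x_{r+1},\dots,x_{m+1}\}$ for $r\le j\le m$, and $N(y_{m+1})=X$; it then verifies $\minm=m$ by a four-case analysis on which two $y$-vertices are missed by a putative small maximal matching. Your hub-and-petal graph is a genuinely different witness: the universal vertices $x_0,u_1,\dots,u_s$ let you replace the case analysis by the clean pigeonhole bound $c|A|\le c$, which is arguably more transparent. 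Both routes invoke Theorem~\ref{thmordindclass} in the same way for part~(1), and the leaf-free check for part~(2) is the same idea as in the paper.

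The error is that $\{x_1\}\cup(Y\setminus C_1)$ is \emph{not} independent: your set contains $y_0$, and $x_1y_0$ is an edge of $G_{r,m}$. The repair is easy. When $m>r$, the two sides $X'$ and $Y$ are themselves maximal independent sets of sizes $m+1$ and $(r+1)+r(m-r)$, differing by $(m-r)(r-1)>0$. When $m=r$ (so $s=0$, $c=1$), the set $\{x_1,y_2,\dots,y_r\}$ is maximal independent of size $r<m+1=|X'|$. In either case $G_{r,m}$ is not unmixed. (If instead you simply delete $y_0$ from your proposed set, note that the resulting cardinality $|Y|-c=r+(r-1)(m-r)$ equals $m+1$ precisely when $(r,m)=(3,4)$, so that single adjustment does not cover every case.)
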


\begin{proof}
Set $X=\{x_1, \ldots, x_{m+1}\}$ and $Y=\{y_1, \ldots, y_{m+1}\}$. Let $G_{r,m}$ be the bipartite graph with vertex set $V(G_{r,m})=X\sqcup Y$ and edge set$$E(G_{r,m})=\bigcup_{1\leq j\leq r-1}\{x_1y_j, x_{j+1}y_j\}\cup\bigcup_{r\leq j\leq m}\{x_1y_j, x_iy_j\mid r+1\leq i\leq m+1\}\cup\{x_iy_{m+1}\mid 1\leq i\leq m+1\}.$$
For the convenience of the reader, we illustrate the graph $G_{r,m}$ with a sample picture below:

\begin{figure}[h]
    \centering
\begin{tikzpicture}[line cap=round,line join=round,>=triangle 45,x=1.0cm,y=1.0cm]
\draw (1.,3.)-- (1.,1.);
\draw (1.,3.)-- (3.,1.);
\draw (1.,3.)-- (5.,1.);
\draw (1.,3.)-- (7.,1.);
\draw (1.,3.)-- (9.,1.);
\draw (3.,3.)-- (9.,1.);
\draw (5.,3.)-- (9.,1.);
\draw (7.,3.)-- (9.,1.);
\draw (9.,3.)-- (9.,1.);
\draw (3.,3.)-- (1.,1.);
\draw (5.,3.)-- (3.,1.);
\draw (5.,3.)-- (5.,1.);
\draw (5.,3.)-- (7.,1.);
\draw (7.,3.)-- (3.,1.);
\draw (7.,3.)-- (5.,1.);
\draw (7.,3.)-- (7.,1.);
\draw (9.,3.)-- (3.,1.);
\draw (9.,3.)-- (5.,1.);
\draw (9.,3.)-- (7.,1.);
\begin{scriptsize}
\draw [fill=black] (1.,3.) circle (1.5pt);
\draw[color=black] (1,3.2) node {$x_1$};
\draw [fill=black] (3.,3.) circle (1.5pt);
\draw[color=black] (3.,3.2) node {$x_2$};
\draw [fill=black] (5.,3.) circle (1.5pt);
\draw[color=black] (5.,3.2) node {$x_3$};
\draw [fill=black] (7.,3.) circle (1.5pt);
\draw[color=black] (7.,3.2) node {$x_4$};
\draw [fill=black] (9.,3.) circle (1.5pt);
\draw[color=black] (9.,3.2) node {$x_5$};
\draw [fill=black] (1.,1.) circle (1.5pt);
\draw[color=black] (1.,0.8) node {$y_1$};
\draw [fill=black] (3.,1.) circle (1.5pt);
\draw[color=black] (3.,0.8) node {$y_2$};
\draw [fill=black] (5.,1.) circle (1.5pt);
\draw[color=black] (5.,0.8) node {$y_3$};
\draw [fill=black] (7.,1.) circle (1.5pt);
\draw[color=black] (7.,0.8) node {$y_4$};
\draw [fill=black] (9.,1.) circle (1.5pt);
\draw[color=black] (9.,0.8) node {$y_5$};
\end{scriptsize}
\end{tikzpicture}
    \caption{$G_{2,4}$}
    \drlabel{fig:g24}
\end{figure}

It follows directly from the definition that \begin{equation*}
N_{G_{r,m}}(y_j) = \left\{
        \begin{array}{ll}
            \{x_1,x_{j+1}\}, & \quad 1\leq j\leq r-1; \\
            \{x_{r+1}, \ldots, x_{m+1}\}\cup\{x_1\}, & \quad r\leq j\leq m;\\
            X, & \quad j=m+1. 
        \end{array}
    \right.    
\end{equation*}
Clearly, $G_{r,m}$ is a connected bipartite graph that does not have any leaf. Therefore using \cite[Corollary 2.10]{VV08}, it is not a sequentially Cohen-Macaulay graph. Moreover, it is easy to check that the set $\{x_2, \ldots, x_r, y_r, \ldots, y_m\}$ is a maximal independent set of $G_{r,m}$ with cardinality $m<|X|$. Thus, $G_{r,m}$ is not an unmixed graph. Hence, we only need to verify condition (i).

Set
$$J_1=N_{G_{r,m}}(y_1), J_2=N_{G_{r,m}}(y_2), \ldots, J_r=N_{G_{r,m}}(y_r), J_{r+1}=X.$$Note that for a subset $J\subseteq X$, we have $c_J> 0$ if and only if $J\in \{J_1, \ldots, J_{r+1}\}$. Then it follows from Theorem \ref{thmordindclass} that $${\rm reg}(S/I(G_{r,m}))=\indm(G_{r,m})=\ordm(G_{r,m})=r.$$Now, we compute 
$\minm(G_{r,m})$. It is obvious that $$\{y_2x_3, y_3x_4,\ldots, y_{m-1}x_m, y_mx_1, y_{m+1}x_2\}$$ is a maximal matching in $G_{r,m}$ of size $m$. Hence, $\minm(G_{r,m})\leq m$. Suppose $M$ is a maximal matching in $G_{r,m}$ with $|M|\leq m-1$. Thus, there are two distinct vertices $y_i, y_j\in Y\setminus V(M)$. Without loss of generality, we may assume that $i<j$. We consider the following cases.

{\bf Case 1.} Suppose $i,j\leq r-1$, then at least one of the vertices $x_{i+1}$ and $x_{j+1}$ does not belong to $V(M)$, as otherwise the edges $y_{m+1}x_{i+1}$ and $y_{m+1}x_{j+1}$ belong to $M$, which contradicts the definition of a matching. For example, suppose $x_{i+1}\notin V(M)$. Then $M\cup \{y_ix_{i+1}\}$ is a matching in $G_{r,m}$. Thus, $M$ is not a maximal matching.

{\bf Case 2.} Suppose $1\leq i\leq r-1$ and $r\leq j\leq m$. Since $M$ is a maximal matching in $G_{r,m}$, we deduce that $N_{G_{r,m}}(y_i)\cup N_{G_{r,m}}(y_j)\subseteq V(M)$. This implies that$$\{x_1, x_{i+1}, x_{r+1}, \ldots, x_{m+1}\}\subseteq V(M).$$Since $x_{i+1}\in V(M)$ and $y_i\notin V(M)$, the edge $y_{m+1}x_{i+1}$ must belong to $M$. Then since $x_{r+1}, \ldots, x_{m+1}\in V(M)$, we conclude that $y_r, \ldots, y_m\in V(M)$. Moreover, according to Case 1, we may assume that $\{y_1, \ldots, y_{r-1}\}\setminus \{y_i\}\subseteq V(M)$. Therefore, $Y\setminus \{y_i\}\subseteq V(M)$. So, $|M|\geq m$, which is a contradiction.

{\bf Case 3.} Suppose $r\leq i,j\leq m$. Since $M$ is a maximal matching, we must have $N_{G_{r,m}}(y_i)\cup N_{G_{r,m}}(y_j)\subseteq V(M)$. In particular, $\{x_{r+1}, \ldots, x_{m+1}\}\subseteq V(M)$. So, in $G_{r,m}$, there are at least $m-r+1$ vertices other that $y_i$ and $y_j$, which are adjacent to at least one of the vertices $x_{r+1}, \ldots, x_{m+1}$. But this is not the case according to the construction of $G_{r,m}$.

{\bf Case 4.} Suppose $j=m+1$. Since $M$ is a maximal matching, $N_{G_{r,m}}(y_j)\subseteq V(M)$. Therefore, $X\subseteq V(M)$. This implies that $|M|\geq m+1$, which is a contradiction.
\end{proof}

\section{Set-up of $k$-sequences and J-sets} \label{sec3}

The notation in this section sets the stage for the counting
of graphs in the next section.

\begin{definition}\drlabel{defk-sequence}
A {\bf $\bf k$-sequence} is a strictly decreasing sequence of integers
$\{k_0, k_1, \ldots, k_z$\} with $z \ge 2$, $k_z = 0$,
and for all $l = 2, 3, \ldots, z$,
$k_{l+1}\ge 2 k_l - k_{l-1}$.
\end{definition}

Observe that if a $k$-sequence has at least four terms,
then the truncation of the sequence by removing the first term is also a $k$-sequence.
A truncated $k$-sequence is a truncation of infinitely many $k$-sequences.

\begin{examples}\drlabel{exkseq}
\rm
When $k_0 = 2$,
the only possible $k$-sequence is $\{2,1,0\}$.

The only possible $k$-sequences with $k_0 = 3$ are
$\{3,2,1,0\}$ and $\{3,1,0\}$.

When $k_0 = 4$,
there are four $k$-sequences:
$\{4,3,2,1,0\}$; $\{4,2,1,0\}$; $\{4,2,0\}$ and  $\{4,1,0\}$.

When $k_0 = 5$,
there are six $k$-sequences:
$\{5,4,3,2,1,0\}$; $\{5,3,2,1,0\}$; $\{5,3,1,0\}$; $\{5,2,1,0\}$; $\{5,2,0\}$ and $\{5,1,0$\}.
\end{examples}

\begin{definition}\drlabel{defsummation}
For any non-negative integer $n$ and any $k$-sequence $\{k_0, k_1, \ldots, k_z\}$ with $z \ge 3$,
set
$$
D(n; \{k_0, \ldots, k_z\}) = 
\begin{cases}
    \displaystyle \sum_{i=1}^{n-1} \sum_{j=1}^{n-i} C(n-i,j; k_1, \ldots, k_z),
        & \hbox{if $k_2 > 2 k_1 - k_0$}; \\
    \displaystyle \sum_{i=1}^{n-1} \sum_{j=1}^{\min\{n-i,i\}} C(n-i,j; k_1, \ldots, k_z),
        & \hbox{if $k_2 = 2 k_1 - k_0$}, \\
\end{cases}
$$
where $C$ is defined recursively as follows:
$$
C(n,i; k_1, \ldots, k_z) =
\begin{cases}
    n-i+1, & \hbox{if $z = 3$ and $k_3 > 2 k_2 - k_1$}; \\
    \min\{n-i,i\}+1, & \hbox{if $z = 3$ and $k_3 = 2 k_2 - k_1$}; \\
    \displaystyle \sum_{j=0}^{n-i} C(n-i, j; k_2, \ldots, k_z), & \hbox{if $z > 3$ and $k_3 > 2 k_2 - k_1$}; \\
    \displaystyle \sum_{j=0}^{\min\{n-i,i\}} C(n-i, j; k_2, \ldots, k_z), 
        & \hbox{if $z > 3$ and $k_3 = 2 k_2 - k_1$}. \\
    \end{cases}
$$
\end{definition}

Here is a full expansion of $D$:
$$
D(n; \{k_0, \ldots, k_z\})
=
\sum_{i_1 = 1}^{n-1} \sum_{i_2 = 1}^{u_2}
\sum_{i_3 = 0}^{u_3} \sum_{i_4 = 0}^{u_4} \ldots \sum_{i_z = 0}^{u_z} 
1,
$$
where the upper bounds $u_l$ equal either $n-i_1-\cdots-i_{l-1}$
or $\min\{n-i_1-\cdots-i_{l-1}, i_{l-1}\}$, depending on whether strict inequality or equality hold in $k_{l+1} \ge 2 k_l - k_{l-1}$.
The sum does not depend on the specific values in the $k$-sequence,
but only on whether $k_{l+1}$ is strictly greater than or equal to $2k_l - k_{l-1}$.
Note that the counters $i_1$ and $i_2$ start with $1$, but all other counters $i_l$ start with~$0$.

\begin{example}
We work out $C$ explicitly on $k$-sequences from
Examples~\ref{exkseq} 
with $z \ge 3$.
\begin{align*}
C(n,i;2,1,0) &= \min\{n-i,i\}+1, \\
C(n,i;3,1,0) &= n-i+1, \\
C(n,i;3,2,1,0) &= \sum_{j=0}^{\min\{n-i,i\}} C(n-i, j; 2,1,0)
= \sum_{j=0}^{\min\{n-i,i\}} (\min\{n-i-j,j\}+1), \\
C(n,i;4,3,2,1,0) &= \sum_{j=0}^{\min\{n-i,i\}} C(n-i, j; 3,2,1,0) \\
&= \sum_{j=0}^{\min\{n-i,i\}} \sum_{k=0}^{\min\{n-i-j,j\}} (\min\{n-i-j-k,k\}+1).
\\
\end{align*}
\end{example}

\begin{definition}\drlabel{defsummationinclexcl}
Let $K$ be a finite set of $k$-sequences,
each $k$-sequence in $K$ having at least four terms (so $z \ge 3$ for each sequence in $K$). 
Let $W$ be the ordered set from largest to smallest of all non-negative integers 
that appear in all sequences in $K$.
So $0 \in W$.
For any positive integer $l \le |W|$ and any $\underline k \in K$,
let $o(l,\underline k)$ be the subscript index of the $l$th element of $W$ in $\underline k$.
For any integer $n \ge 2$ we define $D_K(n)$ to be
$0$ if the first three terms in the sequences in $K$ are not the same,
and otherwise
$D_K(n)$ is the sum of the common summands in the $D$-sums (from \Cref{defsummation})
coming from the $W$-terms for all sequences in~$K$.
Explicitly,
$$
D_K(n) =
\sum_{i_1 = 1}^{n-1} \sum_{i_2 = 1}^{v_2}
\sum_{i_3 = 0}^{v_3} \sum_{i_4 = 0}^{v_4} \ldots \sum_{i_{|W|} = 0}^{v_{|W|}} 1,
$$
where for $l = 2, \ldots, |W|$,
$v_l$ is the minimum of all the upper bounds in $D(n; \underline k)$ in the $o(l, \underline k)$th summation as $\underline k$ varies over all sequences in $K$.
These upper bounds depend also on the summation indices not appearing in $W$,
and those hidden indices are all set to their common hidden value $0$.
\end{definition}

For example,
$D_K(n) = 0$ for the set $K$ consisting of two $k$-sequences $\{6,3,2,1,0\}$ and $\{6,3,1,0\}$ because their third terms are not the same.

For a more interesting example,
let $K$ consist of $\underline k = \{6,4,2,1,0\}$ and $\underline l = \{6,4,2,0\}$.
Then $W = \{6,4,2,0\}$,
$o(6,\underline k) = o(6,\underline l) = 0$,
$o(4,\underline k) = o(4,\underline l) = 1$,
$o(2,\underline k) = o(2,\underline l) = 2$,
$o(0,\underline k) = 4$, $o(0,\underline l) = 3$.
We write out explicitly
$$
D(n; \{6,4,2,1,0\}) = \sum_{i_1=1}^{n-1} 
\sum_{i_2 = 1}^{\min\{n-i_1,i_1\}}
\sum_{i_3 = 0}^{n-i_1-i_2}
\sum_{i_4 = 0}^{\min\{n-i_1-i_2-i_3,i_3\}} 1
$$
and
$$
D(n; \{6,4,2,0\}) = \sum_{i_1=1}^{n-1} 
\sum_{i_2 = 1}^{\min\{n-i_1,i_1\}}
\sum_{i_3 = 0}^{\min\{n-i_1-i_2,i_2\}} 1
= \sum_{i_1=1}^{n-1} 
\sum_{i_2 = 1}^{\min\{n-i_1,i_1\}}
\sum_{i_3 = 0} ^0
\sum_{i_4 = 0}^{\min\{n-i_1-i_2,i_2\}} 1,
$$
where in the last line we inserted the trivial summation $\sum_{i_3 = 0}^0$ to correspond to
the entry $1$ in $\underline k$ that does not exist in $\underline l$,
and we correspondingly renamed the old $i_3$ as new $i_4$.
From these common rewritings of the two summations we read off
$$
D_K(n) = \sum_{i_1 = 1}^{n-1} \sum_{i_2 = 1}^{\min\{n-i_1,i_1\}}
\sum_{i_3 = 0}^0 \sum_{i_4 = 0}^{\min\{n-i_1 - i_2 - i_3,i_3\}}  
1
 = \sum_{i_1 = 1}^{n-1} \sum_{i_2 = 1}^{\min\{n-i_1,i_1\}} 1.
$$
Incidentally,
this sum simplifies to 
$\frac{1}{2} \lceil n/2 \rceil(\lceil n/2 \rceil-1) + \frac{1}{2} \lfloor n/2 \rfloor (\lfloor n/2 \rfloor +1)$.

The non-trivial part of the last simplification involves exactly two non-trivial summations,
and that is due to the two sequences having exactly two J-sets in common;
J-sets are introduced next.

We will use J-sets in the counting of {\bf connected bipartite} graphs
with ind-match and ord-match both equal to~$2$.

\begin{definition}\drlabel{defJsets}
We fix a connected bipartite graph $G$ on $X \sqcup Y$
with $\ordm(G)=\indm(G)=2$.
{\bf J-sets} are elements of
a list $J_1, J_2, \ldots, J_z$ of non-empty proper subsets of~$X$
such that
\begin{enumerate}
\item
$z \ge 2$.
\item
$|J_1| \le |J_2| \le \cdots \le |J_z|$.
\item
$J_i \cup J_j = X$ for all distinct $i, j \in [z]$.
\item
If $I$ is a proper subset of $X$ such that $c_I > 0$,
then $I = J_l$ for some $l \in [z]$.
\item
$c_{J_1} c_{J_2} > 0$.
\item
$J_1 \cap \cdots \cap J_z = \emptyset$.
\end{enumerate}

For $l = 0, \ldots, z$ we set $k_l = |J_1 \cap \cdots \cap J_l|$.
\end{definition}

\begin{remarks} \rm
These remarks are all about \Cref{defJsets}.

\begin{enumerate}
\item
The assumption $\ordm(G)=\indm(G)=2$ implies that $G$ is not complete bipartite.

\item
By \Cref{thmordindclasstwo},
$I \cup J = X$
for any two distinct subsets $I$ and $J$ of $X$ with $c_I c_J > 0$.
Thus a possible list $J_1, \ldots, J_z$ satisfying conditions (1)--(5)
consists of exactly those proper subsets $I$ of $X$ for which $c_I > 0$.

\item
Let's first look at those lists for which (1)--(5) hold.
So $k_0 = m$. 
By renaming,
without loss of generality $J_1 = \{1,2,\ldots,k_1\}$.
Since $J_2$ contains exactly $k_2$ elements of $J_1$
and all elements of $X \setminus J_1$,
without loss of generality
$J_2 = \{1,2,\ldots, k_2\} \cup \{k_1+1, k_1+2, \ldots, m\}$. 
Since $J_2$ is a proper subset of~$X$,
it follows that $k_2 < k_1$.
If $z \ge 3$,
then $J_3$ must contain $\{k_2+1, k_2+2, \ldots, m\}$
and also exactly $k_3$ elements of $J_1 \cap J_2 = \{1, 2, \ldots, k_2\}$.
So without loss of generality
$J_3 = \{1,2,\ldots, k_3\} \cup \{k_2+1, k_2+2, \ldots, m\}$.
Since $J_3$ is a proper subset,
necessarily $k_3 < k_2$.
Similarly,
for all $l \in [z]$,
$J_l = \{1,2,\ldots, k_l\} \cup \{k_{l-1}+1, k_{l-1}+2, \ldots, m\}$
and $k_0 = m > k_1 > k_2 > \cdots > k_z$.

\item
If we are not assuming condition~(6),
i.e., if $k_z > 0$ (and $z \ge 2$),
then we can add to the list also the proper subset
$J_{z+1} = \{k_z + 1, k_z + 2, \ldots, m\}$
and we increase~$z$ by~$1$,
to get that for the enlarged list, $k_{z+1} = 0$.
Note that for this new set,
$c_{J_{z+1}} = 0$.

\item
There may be several $l$ for which $c_{J_l} = 0$.

\item
Note that $|J_l| = m - k_{l-1} + k_l$.
The condition $|J_1| \le |J_2| \le \cdots \le |J_z| < |X| = m$
is equivalent to the condition
$k_{l+1} \ge 2 k_l - k_{l-1}$ for all $l = 1, 2, \ldots, z-1$.
Thus $k_0, \ldots, k_z = 0$ is a $k$-sequence (see \Cref{defk-sequence}).
In addition, for any $l$,
$|J_l| < |J_{l+1}|$ is equivalent to $k_{l+1} > 2 k_l - k_{l-1}$.

\item
Thus a J-set list determines a $k$-sequence,
and vice versa,
a $k$-sequence defines up to relabeling a J-set list.


\item
In the case that $J_1$ and $J_2$ are disjoint,
then any $J_3$ would have to contain the complements of $J_1$ and $J_2$,
so $J_3$ would not be proper.
Thus in this case necessarily $z = 2$,
and by \Cref{thmordindclasstwo}, $c_X$ is positive.
\end{enumerate}
\end{remarks}

\begin{examples}\rm
We present J-sets for the k-sequences in
Examples~\ref{exkseq}. 

The $k$-sequence $\{2,1,0\}$ corresponds to the sets $J_1 = \{1\}$, $J_2 = \{2\}$.

The $k$-sequence $\{3,2,1,0\}$ corresponds to the sets
$J_1 = \{1,2\}$, $J_2 = \{1, 3\}$, $J_3 = \{2,3\}$.
The $k$-sequence $\{3,1,0\}$ corresponds to the sets
$J_1 = \{1\}$, $J_2 = \{2, 3\}$.

The $k$-sequence $\{4,3,2,1,0\}$ corresponds to the sets
$J_1 = \{1,2,3\}$, $J_2 = \{1,2,4\}$, $J_3 = \{1,3,4\}$, $J_4 = \{2,3,4\}$;
the $k$-sequence $\{4,2,1,0\}$ corresponds to the sets
$J_1 = \{1,2\}$, $J_2 = \{1,3,4\}$, $J_3 = \{2,3,4\}$;
the $k$-sequence $\{4,2,0\}$ corresponds to the sets
$J_1 = \{1,2\}$, $J_2 = \{3,4\}$;
and the $k$-sequence $\{4,1,0\}$
corresponds to the sets $J_1 = \{1\}$, $J_2 = \{2,3,4\}$.


The $k$-sequence $\{6,4,2,1,0\}$,
corresponds to the sets
$J_1 = \{1,2,3,4\}$, $J_2 = \{1,2,5,6\}$, $J_3 = \{1,3,4,5,6\}$ and $J_4 = \{2,3,4,5,6\}$,
and the $k$-sequence $\{6,4,2,0\}$,
corresponds to
$J_1 = \{1,2,3,4\}$, $J_2 = \{1,2,5,6\}$, $J_3 = \{3,4,5,6\}$.
So the last two $k$-sequences have exactly two J-sets in common,
which explains why $D_K$ computed after \Cref{defsummationinclexcl}
has only two summations, all other indices varying trivially.
\end{examples}

\section{Bipartite graphs with induced and ordered matching~$2$}\label{sec:count}

The aim of this section is to count the number of connected bipartite graphs $G$ whose ord-match and ind-match are equal to $2$. For the rest of the section, let $G$ denote a  connected bipartite graph. Suppose $X \sqcup Y$ is the bipartition for the vertex set of $G$. Set $m := |X|$ and $n := |Y|$. So $G$ is a connected spanning subgraph of $K_{m,n}$. Without loss of generality we assume that $m \leq n$. Moreover, suppose that $\indm(G) = \ordm(G)=2$. As the main result of this section, in Theorem \ref{thmall}, we provide an inclusion-exclusion type formula for the number of such graphs. As a consequence, in Corollary \ref{corclosed}, we obtain a closed formula for the number of these graphs when $m\leq 4$.

We first count a special type of connected bipartite graphs whose ind-match and ord-match are equal to $2$.

\begin{proposition}\drlabel{propdisjoint}
Assume that $I$ is a proper subset of $X$ and that $n=|Y|\geq 3$.
The number of connected spanning subgraphs $G$ of $K_{m,n}$ with $\indm(G)=\ordm(G)=2$ 
for which $c_I c_{X \setminus I} > 0$
equals $\binom{n-1}{2}$ if $|I| \not = n/2$
and $\left\lfloor \frac{n}{2} \right\rfloor
\left(
\left\lceil \frac{n}{2} \right\rceil -1
\right)$ otherwise.
\end{proposition}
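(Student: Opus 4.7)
The plan is to invoke the last clause of \Cref{thmordindclasstwo} to pin down the structure of $G$, after which the counting reduces to compositions of $n$. Under the hypotheses, $I$ and $X \setminus I$ are disjoint and both have positive $c$-value; since $G$ is connected, \Cref{thmordindclasstwo} then forces the existence of a third set $J_3$ with $c_{J_3} > 0$, which must be $X$ itself, so $c_X > 0$. Hence the subsets $J \subseteq X$ with $c_J > 0$ are exactly $I$, $X \setminus I$, and $X$, and the vertex set $Y$ partitions as
\[
Y \;=\; C_I \,\sqcup\, C_{X \setminus I} \,\sqcup\, C_X,
\qquad (a, b, c) := (c_I,\, c_{X\setminus I},\, c_X) \in \mathbb{Z}_{\ge 1}^{\,3}, \quad a + b + c = n.
\]
The graph $G$ is then completely recovered from this ordered triple of parts (every edge is forced by the ``type'' of its $Y$-endpoint), and its isomorphism class is determined by the sizes $(a, b, c)$ up to any permutation of $X$ that interchanges the distinguished sets $I$ and $X \setminus I$.

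When $|I| \neq |X \setminus I|$ (which I read as the condition in the statement, with the ``$n/2$'' interpreted as $m/2$), no such interchange is possible, so iso-classes correspond bijectively to compositions $a + b + c = n$ with $a, b, c \ge 1$, of which there are $\binom{n-1}{2}$ by stars-and-bars. When $|I| = |X \setminus I| = m/2$, there is an extra involution $(a, b, c) \leftrightarrow (b, a, c)$; by Burnside's lemma the orbit count equals
\[
\tfrac{1}{2}\!\left(\binom{n-1}{2} + \left\lfloor \tfrac{n-1}{2} \right\rfloor\right),
\]
the second summand being the number of fixed compositions $a = b$ (equivalently $2a + c = n$ with $a, c \ge 1$). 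A short case split on the parity of $n$ then verifies this simplifies to $\lfloor n/2 \rfloor(\lceil n/2 \rceil - 1)$.

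No step is expected to be particularly difficult; the main conceptual content is the reduction to exactly three neighborhood-types via \Cref{thmordindclasstwo}, and the only subtle point is correctly identifying the symmetry between $I$ and $X \setminus I$ when they have equal size. After that reduction, the proof is a standard stars-and-bars plus Burnside calculation.
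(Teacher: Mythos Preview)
Your proposal is correct and follows essentially the same outline as the paper: both reduce via \Cref{thmordindclasstwo} to exactly three neighborhood-types $I$, $X\setminus I$, $X$ with positive $c$-values, and then count compositions of $n$ into three positive parts. The only cosmetic difference is in the symmetric case $|I|=|X\setminus I|$: the paper imposes the normalization $c_{X\setminus I}\le c_I$ and evaluates the resulting double sum via \Cref{propsumdisjoint}, whereas you use Burnside's lemma directly, which is slightly slicker and avoids that auxiliary summation. You are also right that the ``$n/2$'' in the statement should read $m/2$, i.e., the condition is $|I|\neq |X\setminus I|$; the paper's own proof confirms this reading.
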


\proof
By \Cref{thmordindclasstwo},
$I$, $X \setminus I$ and $X$ are the only subsets $K$ of $X$ for which 
$c_K$ is positive.

When $|I| \not = |X \setminus I|$, then the number of such graphs equals
$$
\sum_{c_I=1}^{n-2} \sum_{c_{X \setminus I}=1}^{n-c_I-1} 1 
= \sum_{i=1}^{n-2} \sum_{j=1}^{n-i-1} 1
= \sum_{i = 1}^{n-2} (n-i-1)
= \binom{n-1}{2},
$$
and if $|I| = |X \setminus I|$, then to not count unlabeled graphs twice,
the number of such graphs equals
$$
\sum_{c_I=1}^{n-2} \sum_{c_{X \setminus I}=1}^{\min\{c_I, n-c_I-1\}} 1
= \sum_{i=1}^{n-2} \sum_{j=1}^{\min\{i, n-i-1\}} 1,
$$
which by \Cref{propsumdisjoint} equals
$\left\lfloor \frac{n}{2} \right\rfloor
\left(
\left\lceil \frac{n}{2} \right\rceil -1
\right)$.
\qed

The next proposition shows that the number of connected bipartite graphs $G$ with $\indm(G) = \ordm(G)=2$ is closely related to the notion of $k$-sequences defined in Section \ref{sec3}.

\begin{proposition}\drlabel{propcountforseq}
Let $z \ge 3$ and $k_0 = m, k_1, \ldots, k_z = 0$ a $k$-sequence
with corresponding sets $J_1, \ldots, J_z \subsetneq X = \{1,\ldots, m\}$ as in \Cref{defJsets}.
Then $D(n; \{m, k_1, \ldots, k_z\})$
equals the number of connected bipartite graphs with edges from vertices in $X$ to vertices in the set with~$n$ elements
for which ord-match and ind-match are both equal to~2,
and for which $c_{J_1} c_{J_2} > 0$
and $c_I = 0$ for all proper subsets $I$ of $X$ different from $J_1, \ldots, J_z$.
\end{proposition}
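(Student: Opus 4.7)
The plan is to set up a bijection between the graphs enumerated by the proposition and the canonical tuples of non-negative integers counted by the expansion of $D$ displayed after \Cref{defsummation}.

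First I would parametrize: since $c_I = 0$ for every proper $I \subsetneq X$ outside $\{J_1, \ldots, J_z\}$, any such graph $G$ is determined, up to relabeling of~$Y$, by the tuple $(c_{J_1}, \ldots, c_{J_z}, c_X)$ of non-negative integers summing to~$n$, with $c_{J_1}, c_{J_2} \ge 1$ forced by $c_{J_1} c_{J_2} > 0$. Next I would verify that every such tuple yields a valid graph in the enumeration. The assumption $z \ge 3$ forces $k_2 > 0$: if instead $k_2 = 0$ then $J_1, J_2$ would partition~$X$ and any $J_3 \subsetneq X$ would have to contain both $X \setminus J_1$ and $X \setminus J_2$, hence all of~$X$, a contradiction. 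Thus $J_1 \cap J_2 \ne \emptyset$; combined with $c_{J_1}, c_{J_2} \ge 1$ and $J_1 \cup J_2 = X$, every vertex of~$X$ attaches to the non-empty set $C_{J_1} \cup C_{J_2}$ and every remaining $Y$-vertex attaches to~$X$ through its own $C_{J_l}$ or $C_X$, so $G$ is connected. Applying \Cref{thmordindclasstwo} to the subfamily of J-sets with positive $c$ (which includes at least $J_1$ and $J_2$) then gives $\indm(G) = \ordm(G) = 2$: $J_1, J_2$ do not contain one another (else their union would equal~$X$, forcing one to be~$X$) and pairwise unions among the full list $J_1, \ldots, J_z$ all equal~$X$.

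Next I would handle isomorphism. Two tuples yield isomorphic graphs iff there is $\sigma \in \mathrm{Sym}(X)$ with $\sigma(J_l) \in \{J_1, \ldots, J_z\}$ for every~$l$ and $c'_{\sigma(J_l)} = c_{J_l}$ for all~$l$, $c'_X = c_X$. Because $\sigma$ preserves cardinalities and $|J_l| = m - k_{l-1} + k_l$ is non-decreasing in~$l$, the equal-size J-sets form contiguous blocks, and $\sigma$ permutes each such block internally. Using the explicit form $J_l = \{1, \ldots, k_l\} \cup \{k_{l-1}+1, \ldots, m\}$ from the remarks following \Cref{defJsets}, whenever $|J_l| = |J_{l+1}|$ (equivalently $k_{l+1} = 2k_l - k_{l-1}$) the intervals $J_l \setminus J_{l+1} = \{k_{l+1}+1, \ldots, k_l\}$ and $J_{l+1} \setminus J_l = \{k_l+1, \ldots, k_{l-1}\}$ share the length $k_l - k_{l+1}$ and together fill $\{k_{l+1}+1, \ldots, k_{l-1}\}$, a block that for each other index~$i$ is either contained entirely in $J_i$ or disjoint from~$J_i$; thus any transposition swapping these two intervals is an automorphism of the family exchanging $J_l$ and $J_{l+1}$ while fixing every other $J_i$. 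Composing such swaps realizes every permutation of each contiguous equal-size block, so each orbit has a unique canonical representative in which $c_{J_l} \ge c_{J_{l+1}}$ whenever $|J_l| = |J_{l+1}|$.

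Finally, I would count the canonical tuples: $c_{J_1}$ runs over $[1, n-1]$; $c_{J_2}$ over $[1, n - c_{J_1}]$ if $k_2 > 2k_1 - k_0$ and over $[1, \min\{n - c_{J_1}, c_{J_1}\}]$ if $k_2 = 2k_1 - k_0$; and for $l = 3, \ldots, z$, $c_{J_l}$ runs over $[0, n - c_{J_1} - \cdots - c_{J_{l-1}}]$ or $[0, \min\{n - c_{J_1} - \cdots - c_{J_{l-1}}, c_{J_{l-1}}\}]$ according to whether $k_l > 2k_{l-1} - k_{l-2}$ or $k_l = 2k_{l-1} - k_{l-2}$, with $c_X$ determined by the sum constraint. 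This reproduces term by term the full expansion of $D(n; \{m, k_1, \ldots, k_z\})$ displayed after \Cref{defsummation}. The main obstacle is the isomorphism analysis: pinning down that the stabilizer of the family $\{J_1, \ldots, J_z\}$ in $\mathrm{Sym}(X)$ acts on $c$-tuples exactly as the product of symmetric groups on contiguous equal-size runs, so that the run-wise non-increasing condition picks out exactly one representative per orbit.
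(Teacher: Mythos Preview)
Your approach is the same as the paper's: parametrize the graphs by the tuple $(c_{J_1},\ldots,c_{J_z},c_X)$ summing to $n$ with $c_{J_1},c_{J_2}\ge 1$, impose the non-increasing condition within each run of equal-size $J_l$'s to pick a unique representative per isomorphism class, and read off the nested sum as $D(n;\{m,k_1,\ldots,k_z\})$. The paper's proof simply asserts the two key facts you work out, namely that every tuple yields a graph meeting the hypotheses and that the only over-counting comes from permuting equal-size $J$-sets; your verification of connectivity via $k_2>0$, your appeal to \Cref{thmordindclasstwo}, and your explicit interval-swap automorphisms fill in exactly the details the paper leaves to the reader.
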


\proof
The count is of unlabelled sets of vertices.
With the set-up as in \Cref{defJsets},
we are counting the number of $(z+1)$-tuples
$(c_{J_1}, \ldots, c_{J_z}, c_X)$ for which $c_{J_1}, c_{J_2} > 0$,
$(\sum_{l=1}^z c_{J_l}) + c_X = |Y| = n$,
and $c_I = 0$ for all proper subsets $I$ of $X$ other than $J_1, \ldots, J_z$.
In order to not count identical unlabeled graphs twice,
we need a further restriction:
whenever $J_{l}, J_{l+1}, \ldots, J_{l+k}$ have the same number of elements,
then we may assume that $c_{J_l} \ge c_{J_{l+1}} \ge \cdots \ge c_{J_{l+k}}$.
If $J_l$ and $J_{l+1}$ have a different number of elements,
then there is no restriction on the comparison of sizes of $c_{J_l}$ and $c_{J_{l+1}}$.
Thus the count equals precisely $D(n; \{m, k_1, \ldots, k_z\})$ from \Cref{defsummation}.
\qed

In order to count all connected bipartite graphs with ord-match and ind-match both being equal to~$2$, we thus need a list of all $k$-sequences
(or equivalently, of corresponding J-sets for one of the two sets of vertices),
count the graphs for each of the $k$-sequences,
and then use inclusion-exclusion to remove multiple counts of some of the graphs.
This will be done in the next theorem.

%
%
%
We set some notation needed for the next theorem. 
\begin{notation}
For $m \le n$, define
$$
N(m,n) =
\left\lfloor{\frac{m-1}{2}}\right\rfloor {n-1 \choose 2}
+ (\delta_{m, \hbox{\tiny even}}) 
\left\lfloor \frac{n}{2} \right\rfloor
\left(
\left\lceil \frac{n}{2} \right\rceil-1
\right)
+\sum_K (-1)^{|K|-1} D_K(n),
$$
where $K$ varies over all non-empty subsets of the set of all $k$-sequences starting with $k_0 = m$ and with at least $4$ terms, where $D_K$ is from \Cref{defsummationinclexcl},
and $\delta_{m, \hbox{\tiny even}}$ is $1$ if $m$ is even and $0$ otherwise.
\end{notation}
\begin{theorem}\drlabel{thmall}
If $m < n$,
then $N(m,n)$ equals the number of connected non-isomorphic spanning subgraphs of $K_{m,n}$
with both ord-match and ind-match being equal to~$2$.

When $m = n$,
then $N(m,n)$ counts the graphs as before with vertices unlabeled but treating the two sets of vertices as labeled.
\end{theorem}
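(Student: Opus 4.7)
The plan is to partition the connected non-isomorphic spanning subgraphs $G$ of $K_{m,n}$ with $\indm(G)=\ordm(G)=2$ according to the structure of the J-set list of $G$. The support of $G$, namely the set of proper subsets $I \subsetneq X$ with $c_I > 0$, has at least two elements; by \Cref{thmordindclasstwo} and the subsequent remarks, we are in one of two cases. In Case~A the support has exactly two members, which are disjoint and complementary in $X$, so the corresponding $k$-sequence has only three terms. In Case~B every pair of members of the support has nonempty intersection, and the $k$-sequence has at least four terms. I would first verify that these cases are disjoint: if two members $I_1, I_2$ of the support are disjoint with $I_1 \cup I_2 = X$, then any third $I_3 \subsetneq X$ with $I_3 \cup I_j = X$ for $j \in \{1,2\}$ would satisfy $I_3 \supseteq (X \setminus I_1) \cup (X \setminus I_2) = I_2 \cup I_1 = X$, contradicting properness.

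In Case~A, \Cref{propdisjoint} gives $\binom{n-1}{2}$ graphs per unordered pair $\{I, X\setminus I\}$ with $|I| \ne |X\setminus I|$, and $\lfloor n/2\rfloor(\lceil n/2\rceil -1)$ when $|I|=|X\setminus I|=m/2$. Summing over the $\lfloor(m-1)/2\rfloor$ unordered pairs with unequal sizes and, when $m$ is even, the single equal-size pair produces the first two summands of $N(m,n)$.

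In Case~B, I would apply inclusion-exclusion over the set of all $k$-sequences starting with $k_0=m$ and having at least four terms. For each such $\underline k$, let $A_{\underline k}$ be the class of graphs with $c_{J_1^{\underline k}}c_{J_2^{\underline k}}>0$ and $c_I=0$ for all proper $I\notin\{J_1^{\underline k},\ldots,J_{z_{\underline k}}^{\underline k}\}$; \Cref{propcountforseq} gives $|A_{\underline k}|=D(n;\underline k)$. A graph in $\bigcap_{\underline k\in K}A_{\underline k}$ has its support contained in the intersection of the J-set lists and containing each $\{J_1^{\underline k},J_2^{\underline k}\}$. The remaining task is to show that $\bigl|\bigcap_{\underline k\in K}A_{\underline k}\bigr|=D_K(n)$. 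Comparing $|J_1^{\underline k}|=k_1^{\underline k}$ and $|J_2^{\underline k}|=m-k_1^{\underline k}+k_2^{\underline k}$ as $\underline k$ varies, one forces $(k_0,k_1,k_2)$ to be common to all $\underline k\in K$ whenever the intersection is nonempty; otherwise the intersection is empty, justifying the zero case in \Cref{defsummationinclexcl}. When the first three terms do coincide, one would parameterize multiplicity configurations by the common J-sets indexed by $W$, and show that the strictest anti-double-counting upper bound (across $\underline k\in K$) on each variable is precisely the minimum appearing in the definition of $D_K$, with indices outside $W$ frozen at~$0$.

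The main obstacle will be carrying out this matching in full: verifying that freezing a phantom index at~$0$ (for an index present in some but not all $\underline k\in K$) correctly enforces $c_J=0$ for the corresponding J-set in the intersection, which in turn may collapse the neighboring min-bounds to~$0$ (as in the worked example following \Cref{defsummationinclexcl}), thereby correctly eliminating those J-sets from consideration. Finally, because neither \Cref{propdisjoint} nor \Cref{propcountforseq} quotients by any swap of the two parts $X$ and $Y$, the resulting total $N(m,n)$ counts isomorphism classes when $m<n$ (where the parts are distinguished by cardinality) and side-labeled isomorphism classes when $m=n$, exactly as stated in the theorem.
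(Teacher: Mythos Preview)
Your proposal is correct and follows the same structure as the paper's proof: split into the disjoint case ($z=2$, handled by \Cref{propdisjoint}) and the non-disjoint case ($z\ge 3$, handled by inclusion--exclusion over $k$-sequences via \Cref{propcountforseq}). Your sketch is in fact more thorough than the paper's own argument, which dispatches the entire third summand in a single sentence without unpacking the inclusion--exclusion or the identity $\bigl|\bigcap_{\underline k\in K}A_{\underline k}\bigr|=D_K(n)$; the obstacle you flag is real but is left implicit in the paper as well.
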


\proof
Every graph under discussion corresponds to a unique collection of proper subsets $J_1, J_2, \ldots, J_z$
with non-decreasing cardinalities
and their corresponding $k$-sequence
$m = k_0, k_1, \ldots, k_z = 0$.

We first count the cases with $z = 2$,
i.e., when $J_1$ and $J_2$ are disjoint.
Then \Cref{propdisjoint} applies in each case.
Since $|J_1| \le |J_2|$,
necessarily $|J_1| \le m/2$.
The $\left\lfloor {m-1 \over 2} \right\rfloor$ cases in which $|J_1| < |J_2|$
each give ${n-1 \choose 2}$ graphs,
and when $m$ is even,
the case
$|J_1| = |J_2|$ gives
$\left\lfloor n \over 2 \right\rfloor
\left(
\left\lceil n \over 2 \right\rceil-1
\right)$
graphs.
This accounts for the first two summands in $N(m,n)$.

The third summand comes from counting non-disjoint J-subsets
as in \Cref{propcountforseq}.
\qed

In the following corollary, we obtain a closed formula for the number of connected bipartite graphs with $\indm(G) = \ordm(G)=2$ when $\min\{|X|, |Y|\}\leq 4$.

\begin{corollary}\drlabel{corclosed}
For $2 \le m \le 4$,
the number $N(m,n)$ of connected non-isomorphic spanning subgraphs of $K_{m,n}$
with ind-match and ord-match both equal to 2 is:
$$
\begin{cases}
E, & \hbox{if $m = 2$ and necessarily $n \ge 3$}; \\
3, & \hbox{if $m = n = 3$};\\
{n-1 \choose 2} +  T, & \hbox{if $m = 3 < n$};\\
14, & \hbox{if $m = n = 4$}; \\
{n-1 \choose 2} + E + U + V, & \hbox{if $m = 4 < n$}, \\
\end{cases}
$$
where
\begin{align*}
E &= \left\lfloor n \over 2 \right\rfloor
\left(
\left\lceil n \over 2 \right\rceil-1
\right), \\
T &= -{25 \over 144} - {n \over 12}  + {7n^2 \over 24}  + {n^3 \over 36}  + {(-1)^n \over 16} 
+ {3 + \sqrt 3 i \over 54}\left({-1 - \sqrt 3 i \over 2}\right)^n 
+ {3 - \sqrt3 i \over 54} \left({-1 + \sqrt3 i \over 2}\right)^n, \\
U &=
{-1 \over 16} + {5 \over 12} n
+{5 \over 8} n^2+{1 \over 12} n^3+ {1 \over 16} (-1)^n,
\\
V &=
{-641 -486 n + 996 n^2 + 132 n^3 + 6 n^4 \over 3456} + {(11 + 2n)(-1)^n \over 128}
+ {(1-i) i^n + (1+i) (-i)^n \over 32} \\
&\hskip5em+ {1 + \sqrt 3 i \over 54} \left({-1-\sqrt 3 i \over 2} \right)^n
+ {1 - \sqrt 3 i \over 54} \left({-1+\sqrt 3 i \over 2} \right)^n.
\end{align*}
\end{corollary}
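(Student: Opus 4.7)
The plan is to specialize \Cref{thmall} to each $m \in \{2,3,4\}$, enumerate the relevant $k$-sequences using \Cref{exkseq}, evaluate the resulting $D$-sums by the closed-form summation identities to be established in \Cref{sec:appendix}, and assemble the answer via the prescribed inclusion-exclusion. The diagonal cases $m=n$ must be treated separately because the bipartition swap then becomes a graph automorphism which identifies pairs of labeled configurations.

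First I would record the $k$-sequences starting at $m$, as in \Cref{exkseq}: for $m=2$ only $\{2,1,0\}$; for $m=3$ the sequences $\{3,2,1,0\}$ and $\{3,1,0\}$; and for $m=4$ the sequences $\{4,3,2,1,0\}$, $\{4,2,1,0\}$, $\{4,2,0\}$ and $\{4,1,0\}$. The three-term sequences correspond to the disjoint $z=2$ case, and their contribution is read off from \Cref{propdisjoint}: a triple $\{m,k_1,0\}$ with $k_1 \ne m/2$ contributes $\binom{n-1}{2}$, while $k_1 = m/2$, possible only for $m$ even, contributes $E$. The sequences with at least four terms contribute through $\sum_K (-1)^{|K|-1} D_K(n)$. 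For $m=3$ there is only one such sequence, yielding a single summand $D(n;\{3,2,1,0\})$. For $m=4$ the family of $\ge 4$-term sequences is $\{\{4,3,2,1,0\},\{4,2,1,0\}\}$, and the only nonsingleton $K$, namely the pair itself, gives $D_K(n)=0$ because the first three entries of the two sequences do not agree, by the vanishing criterion in \Cref{defsummationinclexcl}. Thus the inclusion-exclusion sum reduces to the two singleton contributions $D(n;\{4,2,1,0\})$ and $D(n;\{4,3,2,1,0\})$, matching the shape $U+V$ in the statement.

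Next I would evaluate each individual $D$-sum by expanding the recursion in \Cref{defsummation}. One gets
\[
D(n;\{3,2,1,0\}) = \sum_{i=1}^{n-1}\sum_{j=1}^{\min\{n-i,i\}}\bigl(\min\{n-i-j,j\}+1\bigr),
\]
\[
D(n;\{4,2,1,0\}) = \sum_{i=1}^{n-1}\sum_{j=1}^{n-i}\bigl(\min\{n-i-j,j\}+1\bigr),
\]
\[
D(n;\{4,3,2,1,0\}) = \sum_{i=1}^{n-1}\sum_{j=1}^{\min\{n-i,i\}}\sum_{k=0}^{\min\{n-i-j,j\}}\bigl(\min\{n-i-j-k,k\}+1\bigr).
\]
Each of these nested $\min$-sums is a quasi-polynomial in $n$; splitting each $\min$ according to which branch is active reduces them to iterated sums of floor functions, which are tabulated in \Cref{sec:appendix}. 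Periodicities of periods $2$, $3$, $4$ enter via $\lfloor \cdot/2\rfloor$, $\lfloor \cdot/3\rfloor$, $\lfloor \cdot/4\rfloor$ generated by the nested mins, and the standard spectral expansion of periodic sequences on the corresponding roots of unity produces the terms $(-1)^n$, $i^n$, $(-i)^n$ and $\bigl(\tfrac{-1\pm\sqrt{3}\,i}{2}\bigr)^n$ visible in $T$, $U$ and $V$.

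Finally I would dispose of the corner cases $m=n=3$ and $m=n=4$ by direct enumeration. For $m=n=3$, the four labeled bipartite graphs produced by the $k$-sequences $\{3,2,1,0\}$ and $\{3,1,0\}$ collapse to three isomorphism classes after identifying the two whose degree sequences are $(3,2,1)$ on one side and $(2,2,2)$ on the other with the one having the roles reversed; for $m=n=4$ a similar, longer case analysis yields $14$. The main technical obstacle is the closed-form evaluation of the quadruple sum $D(n;\{4,3,2,1,0\})$: three levels of $\min$ produce interacting periodicities of periods $2$, $3$ and $4$ which must be reconciled into a single quasi-polynomial, forcing the simultaneous appearance of all four families of roots of unity in the expression for $V$. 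The summation identities of \Cref{sec:appendix} are precisely what make this reconciliation tractable.
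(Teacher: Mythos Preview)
Your proposal is correct and follows essentially the same route as the paper: specialize \Cref{thmall}, enumerate the $k$-sequences from \Cref{exkseq}, feed the three-term ones through \Cref{propdisjoint} and the longer ones through the $D$-sums, then invoke the closed forms from \Cref{sec:appendix}; the diagonal cases $m=n$ are handled by direct inspection in both. Two small remarks. First, you are actually more explicit than the paper on one point: for $m=4$ you observe that the only nonsingleton $K$ in the inclusion-exclusion has $D_K(n)=0$ by the ``first three terms disagree'' clause of \Cref{defsummationinclexcl}, whereas the paper simply adds the four $k$-sequence contributions without commenting on why no correction term appears. Second, your gloss on how the appendix identities are obtained (splitting the $\min$'s and expanding floors spectrally) differs from what the paper actually does there: the appendix converts each sum to a count of nonnegative solutions of a linear Diophantine equation, derives a constant-coefficient linear recurrence by inclusion-exclusion on the positivity constraints, and then solves via the characteristic polynomial. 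Both methods are valid and lead to the same quasi-polynomial closed forms, but since you cite \Cref{sec:appendix} for the outcome anyway, this difference is only in the heuristic commentary, not in the proof of the corollary.
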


\proof
The justification for $m = 2$ is that there is only one $k$-sequence with $z = 2$,
and \Cref{propdisjoint} applies.

The cases $m = n = 3$ and $m = n = 4$ are special because
the two vertex sets can be switched,
and the general formula does not accommodate such switching.
These cases can be verified manually and we do not show the work here.

For $m = 3 < n$,
the $k$-sequences are $3,1,0$ and $3,2,1,0$.
For the first $k$-sequence,
the sets $J_1$ and $J_2$ are disjoint,
so \Cref{propdisjoint} applies
and we get ${n-1 \choose2}$ graphs.
For the $k$-sequence $3,2,1,0$, the number of graphs equals the sum
$$
\sum_{i = 1}^{n-1}
\sum_{j = 1}^{\min\{n-i, i\}}
\sum_{k = 0}^{\min\{n-i-j, j\}}
1,
$$
which by \Cref{proprecursive} equals~$T$.

For $m = 4 < n$,
the $k$-sequence $4,1,0$ contributes ${n-1 \choose 2}$ graphs
and the $k$-sequence $4,2,0$ contributes $E$ graphs
by \Cref{propdisjoint}.
The contribution of the $k$-sequence $4,2,1,0$ is 
$$
\sum_{i = 1}^{n-1}
\sum_{j = 1}^{n-i}
\sum_{k = 0}^{\min\{n-i-j, j\}}
1,
$$
which by \Cref{proprecursive4} equals~$U$.
Finally,
the $k$-sequence $4,3,2,1,0$ contributes
$$
\sum_{i = 1}^{n-1}
\sum_{j = 1}^{\min\{n-i, i\}}
\sum_{k = 0}^{\min\{n-i-j, j\}}
\sum_{l = 0}^{\min\{n-i-j-k, k\}}
1,
$$
which by \Cref{proprecursive4long} equals~$V$.
\qed

\section{Appendix: Some explicit summations}\label{sec:appendix}

The closed forms of the sums in this appendix are used in the previous section.

\begin{proposition}\drlabel{propsumdisjoint}
For any integer $n \ge 3$,
$$
\sum_{i=1}^{n-2} \sum_{j=1}^{\min\{i, n-i-1\}} 1
= 
\left\lfloor \frac{n}{2} \right\rfloor
\left(
\left\lceil \frac{n}{2} \right\rceil
-1
\right).
$$
\end{proposition}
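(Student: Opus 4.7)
The plan is to evaluate the double sum directly by recognizing that the inner sum is trivial and then splitting the outer sum according to which argument of the $\min$ is smaller.

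First, I would note that the inner sum $\sum_{j=1}^{\min\{i,n-i-1\}} 1$ equals $\min\{i, n-i-1\}$ whenever this quantity is positive, and contributes $0$ otherwise (note that for $i \in \{1,\dots,n-2\}$ we have $n-i-1 \ge 1$, and $i \ge 1$, so the minimum is automatically at least $1$ throughout the range of summation). Thus the double sum reduces to $S(n) := \sum_{i=1}^{n-2} \min\{i, n-i-1\}$.

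Next I would observe that $i \le n-i-1$ iff $i \le (n-1)/2$, so I split $S(n)$ into two pieces at the midpoint and make the substitution $k = n-i-1$ in the second piece to turn it into another sum of consecutive integers from $1$. This reduces everything to standard triangular-number identities.

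To finish I would handle the two parity cases separately. For $n = 2m$ even, the split gives $S(n) = \sum_{i=1}^{m-1} i + \sum_{k=1}^{m-1} k = m(m-1)$, matching $\lfloor n/2\rfloor (\lceil n/2\rceil - 1) = m(m-1)$. For $n = 2m+1$ odd, the split gives $S(n) = \sum_{i=1}^{m} i + \sum_{k=1}^{m-1} k = \binom{m+1}{2} + \binom{m}{2} = m^2$, matching $\lfloor n/2\rfloor (\lceil n/2\rceil - 1) = m(m+1-1) = m^2$.

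There is no real obstacle here; this is a bookkeeping computation. The only thing to be careful about is the off-by-one issue at the boundary $i = (n-1)/2$ in the odd case (where both arguments of the $\min$ coincide and should be counted exactly once), which is why the odd case produces a sum up to $m$ on one side and only up to $m-1$ on the other. An alternative, cleaner packaging would be to pair up terms symmetrically via $i \leftrightarrow n-1-i$ — this pairing is a bijection on $\{1,\dots,n-2\}$ fixing $i=(n-1)/2$ when $n$ is odd — and use $\min\{i, n-i-1\} = \min\{n-1-i, i\}$ to conclude $S(n) = 2\sum_{i=1}^{\lfloor (n-1)/2 \rfloor} i$ minus the possibly double-counted middle term, giving the closed form in one line for each parity.
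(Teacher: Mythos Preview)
Your proof is correct and follows essentially the same approach as the paper: both split the sum at the threshold $i \le (n-1)/2$ and then verify the closed form by a parity case check. The only cosmetic difference is that you handle the upper range via the substitution $k = n-i-1$ to get a second triangular sum directly, whereas the paper expands $\sum (n-1-i)$ as $(n-1)\cdot(\text{count}) - \sum i$ before simplifying; the end computations $m(m-1)$ and $m^2$ are identical.
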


\proof
Inequality
$i \le n-i-1$ holds if and only if $i \le \lfloor{n-1 \over 2}\rfloor$,
so the count simplifies to:
\begin{align*}
\sum_{i=1}^{n-2}
\sum_{j=1}^{\min\{i, n-i-1\}} 1 &=
\sum_{i=1}^{\lfloor{n-1 \over 2}\rfloor}
\sum_{j=1}^i 1
+ \sum_{i = \lfloor{n-1 \over 2}\rfloor+1}^{n-2}
\sum_{j=1}^{n-i-1} 1 \\
&=
\sum_{i=1}^{\lfloor{n-1 \over 2}\rfloor} i
+ (n-1)\left(n-2- \left\lfloor{n-1 \over 2}\right\rfloor\right)
- \sum_{i = \lfloor{n-1 \over 2}\rfloor+1}^{n-2} i \\
&=
\left\lfloor{n-1 \over 2}\right\rfloor
\left(\left\lfloor{n-1 \over 2}\right\rfloor + 1 \right)
+ (n-1)\left({n \over 2} - \left\lfloor{n-1 \over 2}\right\rfloor - 1\right) \\
&= \left\{ \begin{array}{ll}
    k^2 - k, & \text{ if } n = 2k; \\
    k^2, & \text{ if } n = 2k+1,
\end{array} \right. \\
&= \left\lfloor n \over 2 \right\rfloor^2
+
\left\lfloor n \over 2 \right\rfloor
\left(
\left\lceil n \over 2 \right\rceil
- \left\lfloor n \over 2 \right\rfloor
-1
\right) \\
&=
\left\lfloor n \over 2 \right\rfloor
\left(
\left\lceil n \over 2 \right\rceil
-1
\right). &\qedbox
\end{align*}

\begin{proposition}\drlabel{proprecursive}
For any non-negative integer $n$,
\begin{align*}
\sum_{i = 1}^{n-1}
&\sum_{j = 1}^{\min\{n-i, i\}}
\sum_{k = 0}^{\min\{n-i-j, j\}}
1 \\
&=
-{25 \over 144} - {n \over 12}  + {7n^2 \over 24}  + {n^3 \over 36}  + {(-1)^n
\over 16} \\
&\hskip 5em=
+ {3 + \sqrt 3 i \over 54}\left({-1 - \sqrt 3 i \over 2}\right)^n 
+ {3 - \sqrt3 i \over 54} \left({-1 + \sqrt3 i \over 2}\right)^n.
\end{align*}
\end{proposition}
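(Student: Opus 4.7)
The plan is to recast the triple sum combinatorially and then extract a closed form via a generating function. First, the innermost sum simplifies to
\[
\sum_{k=0}^{\min\{n-i-j,j\}} 1 \;=\; \min\{n-i-j,j\}+1,
\]
which counts non-negative integers $k$ with $k\le\min\{n-i-j,j\}$. Combining with the outer constraints gives the clean description
\[
S(n) := \sum_{i,j,k}1 \;=\; |\{(i,j,k)\in\mathbb{Z}^{3}\colon i\ge j\ge k\ge 0,\ j\ge 1,\ i+j+k\le n\}|.
\]
Stratifying by the value $m=i+j+k$, such a triple is exactly a partition of $m$ into at most three parts other than the one-part partition $(m,0,0)$; hence the triples with $i+j+k=m$ are counted by $p_{\le 3}(m)-1$. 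Summing over $m$ yields
\[
S(n) \;=\; T(n)-(n+1),\qquad \text{where}\ T(n)=\sum_{m=0}^{n}p_{\le 3}(m).
\]

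Next I would compute $T(n)$ from its generating function
\[
\sum_{n\ge 0}T(n)x^{n} \;=\; \frac{1}{(1-x)^{2}(1-x^{2})(1-x^{3})} \;=\; \frac{1}{(1-x)^{4}(1+x)(1+x+x^{2})}.
\]
A partial-fraction decomposition produces: (i) a pole of order $4$ at $x=1$, whose contribution---extracted via $(1-x)^{-k}=\sum_{n\ge 0}\binom{n+k-1}{k-1}x^{n}$---is a cubic polynomial in $n$ with lower-degree corrections; (ii) a simple pole at $x=-1$ with residue $\tfrac{1}{16}$, producing the $(-1)^{n}/16$ term; (iii) simple poles at the primitive cube roots of unity $\omega=e^{2\pi i/3}$ and $\bar\omega=e^{-2\pi i/3}$, with conjugate residues $(3\mp\sqrt{3}\,i)/54$, producing the two terms involving $\bigl((-1\pm\sqrt{3}\,i)/2\bigr)^{n}$. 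Finally, subtracting $n+1$ absorbs the linear and constant corrections from part~(i) into the clean coefficients $-\tfrac{n}{12}$ and $-\tfrac{25}{144}$, and delivers the stated closed form.

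The main obstacle lies in part~(iii): one has to evaluate the product $(1-\omega)^{4}(1+\omega)(2\omega+1)$ and rationalize in order to extract the exact coefficient $(3-\sqrt{3}\,i)/54$, which is elementary but error-prone. Should that bookkeeping prove too cumbersome, an alternative is to observe a priori that both sides are quasi-polynomials in~$n$ of degree~$3$ and period dividing~$6$ involving only seven independent real parameters, so the identity is pinned down by checking agreement at the seven values $n=0,1,\dots,6$, reducing the proof to a direct finite verification.
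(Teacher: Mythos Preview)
Your argument is correct. Both you and the paper begin with the same combinatorial reinterpretation: the paper introduces a slack variable $l=n-i-j-k$ to count quadruples $(i,j,k,l)$ summing to $n$ with $i\ge j\ge 1$ and $j\ge k$, which is exactly your condition $i\ge j\ge k\ge 0$, $j\ge 1$, $i+j+k\le n$. After the substitution $u=i-j$, $v=j-k$ the paper is counting solutions of $u+2v+3k+l=n$ minus $n+1$, which is precisely your $T(n)-(n+1)$; your identification with $\sum_{m\le n}p_{\le 3}(m)$ is a tidier way to say the same thing. The methods then diverge in execution: the paper uses inclusion--exclusion on this Diophantine count to produce the seventh-order recurrence $a_n=2a_{n-1}-a_{n-3}-a_{n-4}+2a_{n-6}-a_{n-7}$, factors the characteristic polynomial, and solves a $7\times 7$ linear system from initial values (with computer assistance); you write the generating function $\frac{1}{(1-x)^{4}(1+x)(1+x+x^{2})}$ directly and extract coefficients by partial fractions, with the seven-point check as a fallback. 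These are standard dual routes to the same answer. Your generating-function route is more transparent about where the closed form comes from, while the paper's recurrence sidesteps the fiddly residue computation at the primitive cube roots by pushing all the arithmetic into the linear system.
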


\proof
The summation on the left is the number $a_n$ of quadruples of non-negative integers $i,j,k,l$ 
that add to $n$
with further restrictions that $i \ge j \ge 1$ and $j \ge k$.
Set $u = i - j$, $v = j - k$.
So we are looking for the number $b_n$ of non-negative integer solutions $u,v,k,l$
such that $u + 2v + 3k + l = n$
minus the number $c_n$ of integer solutions $i \ge 0, j = 0 = k, l$ with $i + l = n$.

Let $A$ be the set of non-negative integer solutions of $3x_1+2x_2+x_3+x_4=n$. For each $i=1,2,3,4$, let $A_i$ be the set of non-negative integer solutions of $3x_1+2x_2+x_3+x_4=n$ with $x_i\geq 1$. Then $$A=A_1\cup A_2\cup A_3\cup A_4.$$ Using inclusion-exclusion formula $$|A|=\sum_{i=1}^4|A_i|-\sum_{i<j}|A_i\cap A_j|+\sum_{i<j<k}|A_i\cap A_j\cap A_k|-|A_1\cap A_2\cap A_3 \cap A_4|.$$
To compute the cardinality of $A_1$, set $y_1=x_1-1$. Since for any solution in $A_1$, we have $x_1\geq 1$, hence, for any such a solution, $y_1\geq 0$. Moreover, it follows from $3x_1+2x_2+x_3+x_4=n$ that $3y_1+2x_2+x_3+x_4=n-3$. Using induction hypothesis the number of solution of this equation is $b_{n-3}$. Therefore, $|A_1|=b_{n-3}.$ Similarly, one an compute the cardinality of other~$A_i$ and their intersections. Hence,$$b_n=2b_{n-1}-b_{n-3}-b_{n-4}+2b_{n-6}-b_{n-7}.$$
A similar argument shows that $$c_n=2c_{n-1}-c_{n-2}.$$
This yields that 
\begin{align*}
c_n &= 2c_{n-1}-c_{n-2} \\
&= 2c_{n-1}-2c_{n-3}+c_{n-4} \\
&= 2c_{n-1}-c_{n-3}-c_{n-3}+c_{n-4}\\ &=2c_{n-1}-c_{n-3}-2c_{n-4}+c_{n-5}+c_{n-4}\\
&= 2c_{n-1}-c_{n-3}-c_{n-4}+c_{n-5}\\ & =2c_{n-1}-c_{n-3}-c_{n-4}+2c_{n-6}-c_{n-7}.
\end{align*}
Therefore, $c_n$ satisfies the same recursive formula as $b_n$. Hence, their subtraction $a_n=b_n-c_n$ also satisfies the same formula $a_n=2a_{n-1}-a_{n-3}-a_{n-4}+2a_{n-6}-a_{n-7}$.

The roots of the corresponding characteristic polynomial $x^7-2x^6+x^4+x^3-2x+1$ are 
$1, 1, 1, 1, -1, -1/2 + \sqrt 3 i/2$ and $-1/2 - \sqrt 3 i/2$.
Thus by standard theory of linear recursive sequences with constant coefficients (see for example Theorem 6.21 and Remark 6.23 in \cite{LN}),
the closed form for $a_n$ equals
$$
a_n = c_0 + c_1 n + c_2 n^2 + c_3 n^3 + c_4 (-1)^n
+ c_5 \left((-1+\sqrt 3 \,i)/2\right)^n
+ c_6 \left((-1-\sqrt 3 \,i)/2\right)^n
$$
for some coefficients $c_0$ through $c_6$.
With that we can set up a system of linear equations
with manually computed $a_0 = 0$, $a_1 = 0$,
$a_2 = 1$, $a_3 = 3$, $a_4 = 6$, $a_5 = 10$, $a_6 = 16$,
and with the help of Macaulay2 \cite{GS} we computed the corresponding closed form for $a_n$ to be
$$
-{25 \over 144} - {n \over 12}  + {7n^2 \over 24}  + {n^3 \over 36}  + {(-1)^n \over 16} 
+ {3 + \sqrt 3 i \over 54}\left({-1 - \sqrt 3 i \over 2}\right)^n 
+ {3 - \sqrt3 i \over 54} \left({-1 + \sqrt3 i \over 2}\right)^n. \eqed
$$

\begin{proposition}\drlabel{proprecursive4}
For any non-negative integer $n$,
$$
\sum_{i = 1}^{n-1}
\sum_{j = 1}^{n-i}
\sum_{k = 0}^{\min\{n-i-j, j\}}
1 = 
{-1 \over 16} + {5 \over 12} n
+{5 \over 8} n^2+{1 \over 12} n^3+ {1 \over 16} (-1)^n.
$$
\end{proposition}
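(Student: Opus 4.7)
My plan is to follow the template of the proof of \Cref{proprecursive}. Denoting the triple sum by $a_n$, it counts non-negative integer quadruples $(i, j, k, l)$ with $i + j + k + l = n$, $i \ge 1$, $j \ge 1$ and $0 \le k \le j$ (where $l = n - i - j - k$ plays the slack role). The substitution $v = j - k \ge 0$ reparametrizes this as the count of non-negative integer solutions of $i + v + 2k + l = n$ subject to $i \ge 1$ and $v + k \ge 1$. Writing $a_n = b_n - c_n$, where $b_n$ drops the last constraint and $c_n$ is the correction coming from $v = k = 0$, one sees that $c_n = n$ for $n \ge 1$, while $b_n$ equals the number of non-negative integer solutions of $i' + v + 2k + l = n - 1$ after substituting $i' = i - 1$.

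Next I would adapt the inclusion-exclusion step of \Cref{proprecursive} verbatim: letting $A_r$ denote the set of non-negative solutions of $x_1 + x_2 + 2x_3 + x_4 = n - 1$ with $x_r \ge 1$, for $n \ge 2$ their union is the entire solution set. Inclusion-exclusion over the four $A_r$ and their intersections yields a five-term linear recurrence for $b_n$, expressed cleanly via the identity $(1-x)^4(1+x) = 1 - 3x + 2x^2 + 2x^3 - 3x^4 + x^5$ as
\[
b_n = 3 b_{n-1} - 2 b_{n-2} - 2 b_{n-3} + 3 b_{n-4} - b_{n-5} \quad (n \ge 2).
\]
A direct check shows that $c_n = n$ satisfies the same recurrence, so $a_n$ does as well. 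The characteristic polynomial factors as $(x - 1)^4 (x + 1)$, so by the standard theory of linear recurrences (Theorem 6.21 and Remark 6.23 of~\cite{LN}), $a_n = c_0 + c_1 n + c_2 n^2 + c_3 n^3 + c_4 (-1)^n$ for some constants $c_i$. Computing $a_n$ directly for $n = 0, 1, 2, 3, 4$ and solving the resulting $5 \times 5$ linear system then pins down the $c_i$ and yields the stated closed form.

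The main obstacle is the inclusion-exclusion bookkeeping: there are $15$ non-empty subsets of $\{1, 2, 3, 4\}$ to track, each contributing a shifted $b$-term whose subscript depends on the total weight of the involved coefficients in $x_1 + x_2 + 2x_3 + x_4$. Verifying the recurrence this way, and then solving the $5 \times 5$ system for the $c_i$, is conceptually trivial but error-prone; in practice one would cross-check against the generating function $\frac{x}{(1-x)^3(1-x^2)}$, whose denominator equals the characteristic polynomial of the recurrence.
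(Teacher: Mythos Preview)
Your proposal is correct and follows essentially the same approach as the paper, which simply writes ``with methods as in the proof of \Cref{proprecursive}'' and then states the same five-term recurrence $a_n = 3 a_{n-1} -2 a_{n-2} - 2 a_{n-3} + 3 a_{n-4} - a_{n-5}$, factors the characteristic polynomial as $(x-1)^4(x+1)$, and solves for the coefficients from initial values (the paper uses $a_1,\ldots,a_5$ rather than $a_0,\ldots,a_4$). One small remark: the generating function $\frac{x}{(1-x)^3(1-x^2)}$ you quote is that of $b_n$, not of $a_n$ itself (the latter is $\frac{x^2(1+x-x^2)}{(1-x)^4(1+x)}$); since you only use it to read off the denominator and hence the recurrence, which $a_n$, $b_n$, and $c_n$ all share, this does not affect the argument.
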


\proof
With methods as in the proof of \Cref{proprecursive},
we get
the recursive formula
$a_n = 3 a_{n-1} -2 a_{n-2} - 2 a_{n-3} + 3 a_{n-4} - a_{n-5}$.
The corresponding polynomial is
$x^5 - 3 x^4 + 2 x^3 + 2 x^2 - 3 x + 1$,
whose roots are $1,1,1,1,-1$.
The initial conditions can easily be computed to be
$a_1 = 0$, $a_2 = 1$, $a_3 = 4$, $a_4 = 9$, $a_5 = 17$,
from which with linear algebra (or via {\tt https://oeis.org/A005744})
we get the expression
$$
a_n =
{-1 \over 16} + {5 \over 12} n
+{5 \over 8} n^2+{1 \over 12} n^3+ {1 \over 16} (-1)^n.
\eqed
$$

\begin{proposition}\drlabel{proprecursive4long}
For any non-negative integer $n$,
\begin{align*}
\sum_{i = 1}^{n-1}
&\sum_{j = 1}^{\min\{n-i, i\}}
\sum_{k = 0}^{\min\{n-i-j, j\}}
\sum_{l = 0}^{\min\{n-i-j-k, k\}}
= {-641 -486 n + 996 n^2 + 132 n^3 + 6 n^4 \over 3456} \\
&\hskip5em + {(11 + 2n)(-1)^n \over 128}
+ {(1-i) i^n + (1+i) (-i)^n \over 32} \\
&\hskip5em- {1 \over 27} \left({-1-\sqrt 3 i \over 2} \right)^{n+1}
- {1 \over 27} \left({-1+\sqrt 3 i \over 2} \right)^{n+1}.
\end{align*}
\end{proposition}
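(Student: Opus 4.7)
The plan is to mirror the strategy used in the proof of Proposition~\ref{proprecursive} but with one additional summation variable and correspondingly richer linear recurrence. Write $a_n$ for the quadruple sum on the left. Setting $s = n - i - j - k - l \ge 0$, the sum $a_n$ counts $5$-tuples of non-negative integers $(i,j,k,l,s)$ summing to $n$ subject to $i \ge j \ge 1$, $j \ge k$, and $k \ge l$. Introducing the change of variables $u_1 = i-j$, $u_2 = j-k$, $u_3 = k-l$, $u_4 = l$, these inequalities turn into $u_1,u_2,u_3,u_4,s \ge 0$ together with the single nontrivial condition $u_2 + u_3 + u_4 = j \ge 1$, while $i+j+k+l = u_1 + 2u_2 + 3u_3 + 4u_4$. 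Letting $b_n$ be the count without the constraint $j \ge 1$ and $c_n$ the count of the discarded cases (those with $u_2 = u_3 = u_4 = 0$, i.e.\ the non-negative integer solutions of $u_1 + s = n$, giving $c_n = n+1$), we obtain $a_n = b_n - c_n$.

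Next I would derive a common linear recurrence for $b_n$ and $c_n$, and hence for $a_n$, by the inclusion-exclusion argument used for the sequence in Proposition~\ref{proprecursive}. Equivalently, the generating function of $b_n$ is
\[
\sum_{n \ge 0} b_n x^n \;=\; \frac{1}{(1-x)^2(1-x^2)(1-x^3)(1-x^4)},
\]
whose denominator, after expansion, is $1 - 2x + x^3 + 2x^5 - 2x^6 - x^8 + 2x^{10} - x^{11}$. This gives the recurrence
\[
b_n = 2b_{n-1} - b_{n-3} - 2b_{n-5} + 2b_{n-6} + b_{n-8} - 2b_{n-10} + b_{n-11},
\]
and the same recurrence is satisfied by $c_n = n+1$ (its characteristic polynomial already has $1$ as a double root), hence by $a_n$.

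I would then factor the denominator as $(1-x)^5(1+x)^2(1+x^2)(1+x+x^2)$, so the characteristic polynomial has roots $1$ with multiplicity $5$, $-1$ with multiplicity $2$, the primitive fourth roots of unity $\pm i$, and the primitive cube roots of unity $\omega, \bar\omega = (-1 \pm \sqrt 3\, i)/2$. The standard theory of linear recursions with constant coefficients (as cited in the proof of Proposition~\ref{proprecursive}) then guarantees a closed form
\[
a_n = P_4(n) + Q_1(n)(-1)^n + \beta i^n + \bar\beta (-i)^n + \alpha\,\omega^n + \bar\alpha\,\bar\omega^n,
\]
with $P_4$ a polynomial of degree at most $4$ and $Q_1$ a polynomial of degree at most $1$; this ansatz has exactly $11$ real parameters, matching the order of the recurrence.

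Finally I would compute $a_0, a_1, \ldots, a_{10}$ directly from the defining sum, plug them into the ansatz to obtain an $11 \times 11$ linear system, and solve it (by hand, or with Macaulay2 as in Proposition~\ref{proprecursive}, or by consulting an OEIS entry for this integer sequence) to recover the stated coefficients. The main obstacle is purely arithmetic: correctly multiplying out the denominator to get the $11$-term recurrence, and then resolving the $11$-dimensional linear system to verify that the coefficients of the four spectral pieces come out to exactly the numbers displayed in the proposition; everything else is a direct adaptation of the approach already established.
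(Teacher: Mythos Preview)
Your proposal is correct and follows essentially the same approach as the paper: both rewrite $a_n$ as $b_n-c_n$ via the substitution that linearizes the chain of inequalities, obtain the same order-$11$ recurrence (you via the generating function $1/\bigl((1-x)^2(1-x^2)(1-x^3)(1-x^4)\bigr)$, the paper via the inclusion--exclusion of \Cref{proprecursive}), factor the characteristic polynomial to find the roots $1^5,(-1)^2,\pm i,(-1\pm\sqrt3\,i)/2$, and then solve an $11\times 11$ linear system from the initial values (you use $a_0,\dots,a_{10}$, the paper uses $a_1,\dots,a_{11}$). Your write-up is in fact slightly more explicit about why the recurrence and its characteristic roots arise.
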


\proof
With the methods as in the proof of \Cref{proprecursive}
we get the recursive formula
$$
a_n = 2 a_{n-1} - a_{n-3} - 2 a_{n-5} + 2 a_{n-6} + a_{n-8} - 2 a_{n-10} + a_{n-11}.
$$
The roots of the corresponding polynomial
$x^{11} - 2 x^{10} + x^8 + 2 x^6 - 2 x^5 - x^3 + 2 x -1$
are $1,1,1,1,1,-1,-1,i,-i,(-1+\sqrt 3 \,i)/2,(-1-\sqrt 3 \,i)/2$.
The manually computed initial conditions are
$a_1 = 0$, $a_2 = 1$, $a_3 = 3$, $a_4 = 7$, $a_5 = 12$, $a_6 = 20$,
$a_7 = 30$, $a_8 = 44$, $a_9 = 61$, $a_{10} = 83$, $a_{11} = 109$.
Thus with linear algebra we get the closed form
\begin{align*}
a_n &= {-641 -486 n + 996 n^2 + 132 n^3 + 6 n^4 \over 3456} + {(11 + 2n)(-1)^n
\over 128} \\
&\hskip 5mm
+ {(1-i) i^n + (1+i) (-i)^n \over 32} \\
&\hskip5em+ {1 + \sqrt 3 i \over 54} \left({-1-\sqrt 3 i \over 2} \right)^n
+ {1 - \sqrt 3 i \over 54} \left({-1+\sqrt 3 i \over 2} \right)^n. & \qedbox \cr
\end{align*}



\bibliographystyle{amsplain-ac}

\bibliography{JSFSY-AlCo}

\end{document}